\DeclareMathOperator{\Ad}{Ad}
\DeclareMathOperator{\Aut}{Aut}
\DeclareMathOperator{\tr}{tr}
\DeclareMathOperator{\Ric}{Ric}
\newcommand{\fr}{\mathfrak}
\newcommand{\al}{\alpha}
\newcommand{\bb}{\mathbb}
\DeclareMathOperator{\SO}{SO}
\DeclareMathOperator{\Sp}{Sp}
\DeclareMathOperator{\SU}{SU}
\DeclareMathOperator{\U}{U}
 \newtheorem{lemma} {Lemma} [section]
\newtheorem{theorem}[lemma]{Theorem} 
\newtheorem{prop} [lemma]{Proposition}
\newcommand{\thickhline}{%
    \noalign {\ifnum 0=`}\fi \hrule height 1pt
    \futurelet \reserved@a \@xhline
}
\newcolumntype{"}{@{\hskip\tabcolsep\vrule width 1pt\hskip\tabcolsep}}
\begin{document}

\title{New homogeneous Einstein metrics on quaternionic Stiefel manifolds} 
\author{Andreas Arvanitoyeorgos, Yusuke Sakane  and Marina Statha}
\address{University of Patras, Department of Mathematics, GR-26500 Rion, Greece}
\email{arvanito@math.upatras.gr}
 \address{Osaka University, Department of Pure and Applied Mathematics, Graduate School of Information Science and Technology, Toyonaka, 
Osaka 560-0043, Japan}
 \email{sakane@math.sci.osaka-u.ac.jp}
\address{University of Patras, Department of Mathematics, GR-26500 Rion, Greece}
\email{statha@master.math.upatras.gr} 
\medskip

\begin{abstract}
  
We consider invariant Einstein metrics on the  quaternionic Stiefel manifolds $V_p\bb{H} ^n$  of all orthonormal $p$-frames in $\bb{H}^n$.  This manifold is diffeomorphic to the homogeneous space $\Sp(n)/\Sp(n-p)$ and its
isotropy representation contains equivalent summands.  
We obtain new Einstein metrics on $V_{p}\bb{H}^{n} \cong \Sp(n)/\Sp(n-p)$, where $n = k_1 + k_2 + k_3$ and $p = n-k_3$.  We view $V_{p}\bb{H}^{n}$ as a total space over the genaralized Wallach space $\Sp(n)/(\Sp(k_1)\times\Sp(k_2)\times\Sp(k_3))$ and  over the generalized flag manifold $\Sp(n)/(\U(p)\times\Sp(n-p))$.

\medskip
\noindent 2010 {\it Mathematics Subject Classification.} Primary 53C25; Secondary 53C30, 13P10, 65H10, 68W30.

\medskip
\noindent {\it Keywords}:    Homogeneous space, Einstein metric, quaternionic Stiefel manifold, generalized Wallach space, genaralized flag manifold, isotropy representation, Gr\"obner basis.
   \end{abstract}

\maketitle


\section{Introduction}
\markboth{Andreas Arvanitoyeorgos, Yusuke Sakane and Marina Statha}{New homogeneous Einstein metrics on quaternionic Stiefel manifolds}

A Riemannian manifold $(M, g)$ is called Einstein if the metric $g$ satisfies the condition $\Ric (g) = \lambda\cdot g$ for some $\lambda\in\bb{R}$.  We refer to \cite{Be} and \cite{W1}, \cite{W2} for old and new results on homogeneous Einstein manifolds.  The structure of the set of invariant Einstein metrics on a given homogeneous space is still not very well understood in general.  The situation is only clear for few classes of homogeneous spaces, such as isotropy irreducible homogeneous spaces, low dimensional examples, certain flag manifolds.  For an arbitrary compact homogeneous space $G/H$ it is not clear if the set of invariant Einstein metrics (up to isometry and up to scaling) is finite or not.  A finiteness conjecture states that this set is in fact finite if the isotropy representation of $G/H$ consists of pairwise inequivalent irreducible subrepresentations (\cite{BWZ}).  In the case where the isotropy representation contains some equivalent subrepresentations (isotropy summands) then the diagonal metrics are not unique.  In \cite{ADN1} the authors introduced a method for proving existence of homogeneous Einstein metrics by assuming additional symmetries.  In \cite{St} a systematic and organized description of such metrics is presented.

In the present article we are interested to invariant Einstein metrics on homogeneous spaces $G/H$ whose isotropy representation is decomposed into a sum of irreducible but possibly equivalent summands.  Typical examples of such
homogeneous spaces are the Stiefel manifolds.

For new and old results about Einstein metrics on real Stiefel manifolds $V_{p}\bb{R}^{n}=\SO(n)/\SO(n-p)$ we 
refer to \cite{ArSaSt1}.
The first  invariant Einstein metrics on the quaternionic Stiefel manifolds $V_{p}\bb{H}^{n}=\Sp(n)/\Sp(n-p)$ were obtained by G. Jensen in \cite{J2}, by using Riemannian submersions.
Invariant Einstein metrics on the two marginal cases $V_{1}\bb{H}^{n}=\mathbb{S}^{4n-1}$ and
$V_{n}\bb{H}^{n}=\Sp(n)$ have been studied in \cite{Zi} and \cite{ArSaSt2} respectively. 

In \cite{ADN2} the first author, V.V. Dzhepko and Yu. G. Nikonorov proved that for $s>1$ and $\ell \ge k\ge 1$ the Stiefel manifold $\Sp(s k+\ell)/\Sp(\ell)$ admits at least four $\Sp(s k+\ell)\times (\Sp(k))^s$-invariant Einstein metrics, two of which are Jensen's metrics.  We call the two Einstein metrics,  different from Jensen's metrics, as {\itshape ADN metrics}.

In the present paper we obtain new invariant Einstein metrics on $V_{p}\bb{H}^{n} \cong \Sp(n)/\Sp(n-p)$ where $n = k_1 + k_2 + k_3$ and $p = n-k_3$.  
We view $V_{p}\bb{H}^{n}$, firstly as a total space over the genaralized Wallach space $\Sp(n)/(\Sp(k_1)\times\Sp(k_2)\times\Sp(k_3))$ and secondly as total space over the generalized flag manifold $\Sp(n)/(\U(p)\times\Sp(n-p))$, and
 use invariant metrics described by corresponding inner products (\ref{metric1}) and (\ref{metric2}) (cf. Section 3).
Then we use the method in \cite{ADN1} or \cite{St}, appropriately adjusted.

The main results related to the first fibration are the following:

\smallskip 
\noindent
{\bf Theorem A.}
For $n=3, 4$ the Stiefel manifold $V_{2}\bb{H}^{n}$ admits:
\begin{itemize}
\item[(1)] Eight invariant Einstein metrics which are determined by $\Ad(\Sp(1)\times\Sp(1)\times\Sp(1))$-invariant inner products of the form (\ref{metric1}).  Four of them are new, two are Jensen's metrics and the other two are ADN metrics.
\item[(2)] Eight invariant Einstein metrics which are determined by $\Ad(\Sp(1)\times\Sp(1)\times\Sp(2))$-invariant inner products of the form (\ref{metric1}).  Four of them are new, two are Jensen's metrics and the other two are ADN metrics.
\end{itemize}

\smallskip  
 The main result related to the second fibration is the following:

\noindent
{\bf Theorem B.}
{\it For $ \displaystyle 2  \leq p \leq \frac{3}{4} n$, there exist two  invariant Einstein metrics on $ \Sp(n)/\Sp(n-p)$  of the form {\em (\ref{metric2})} which are different from Jensen's metrics.} 

\smallskip
The proofs of the above theorems are given in Section 4 (Theorems 4.1 and 4.2).
We also make a conjecture about Einstein metrics on the Stiefel manifolds $V_{n-1}\bb{H}^{n}$ ($n\ge 3$) and $V_{n-2}\bb{H}^{n}$ ($n\ge 5$), (cf. Table 1).

\medskip

\noindent
{\bf Acknowledgements.} 
The work was supported by Grant $\# E.037$ from the Research Committee of the University of Patras (Programme K. Karatheodori) and JSPS KAKENHI Grant Number JP16K05130.

\section{A special class of $G$-invariant metrics on $G/H$ and the Ricci tensor}

Let $G$ be a compact Lie group and $H$ a closed subgroup so that $G$ acts transitively on the homogeneous space $G/H$.  Then the homogeneous space $G/H$ is reductive, because we can take $\fr{m} = \fr{g}^{\perp}$ where $\Ad(H)\fr{m}\subset \fr{m}$ with respect to an $\Ad$-invariant scalar product on $\fr{g}$.  So the Lie algebra $\fr{g}$ can be written as $\fr{g} = \fr{h}\oplus\fr{m}$.  The tangent space of $G/H$ at the ${\it o} = eH\in G/H$ is canonically identified with $\fr{m}$.  For $G$ semisimple, the negative of the Killing form $B$ of $\fr{g}$ is an $\Ad(G)$-invariant scalar product, therefore we can choose the above decomposition with respect to this form.  A Riemannian metric $g$ on $G/H$ is called $G$-invariant if the diffeomorphism $\tau_{\al} : G/H \to G/H,$ $\tau_{\al}(g H) = \al g H$ is an isometry.  Any such metric is to one-to-one correspondence with an $\Ad(H)$-invariant scalar product $\langle\cdot,\cdot\rangle$ on $\fr{m}$ and fixed points $(\mathcal{M}^{G})^{\Phi_{H}}$ of the action $\Phi_{H}$ $=\{\Ad(h)|_{\fr{m}} : h\in H\}$ $\subset \Phi = \{\Ad(n)|_{\fr{m}} :  n \in N_{G}(H)\}$ $\subset$ $\Aut(\fr{m})$ on $\mathcal{M}^{G}$.  In the special case where $H = \{e\}$ then $N_{G}(H) = G$, thus the fixed points $(\mathcal{M}^{G})^{\Phi_{H}}$ are the $\Ad(G)$-invariant inner products on $\fr{g}$.  These correspond to the bi-invariant metrics on the Lie group $G$. 

The isotropy representation $\chi : H \to \Aut(\fr{m})$ of the reductive homogeneous space $G/H$ coincides with the restriction of the adjoint representation of $H$ on $\fr{m}$.  We assume that $\chi$ decompose into a direct sum of irreducible subrepresentation $\chi \cong \chi_{1}\oplus\cdots\chi_{s}$, so the tangent space splits into a direct sum of $\Ad(H)$-invariant subspaces 
\begin{equation}\label{diaspasi}
\fr{m} = \fr{m}\oplus\cdots\oplus\fr{m}_{s}.
\end{equation}
In this case the $G$-invariant metrics are determined by the diagonal $\Ad(H)$-invariant scalar products of the form
\begin{equation}\label{diagonia}
\langle\cdot,\cdot\rangle = x_{1}(-B)|_{\fr{m}_{1}} + \cdots + x_{s}(-B)|_{\fr{m}_{s}}, x_{i}\in\bb{R}^{+}.
\end{equation}
If some of the subrepresentations $\chi_{i}$ are equivalent then decomposition (\ref{diaspasi}) is not unique.  Hence the $\Ad(H)$-invariant scalar product is not necessary diagonal.  For this case we can choose a closed subgroup $K$ of $G$ such that $H\subset K\subset N_{G}(H)$ and search for $\Ad(K)$-invariant scalar products on $\fr{m}$ which correspond to a subset $\mathcal{M}^{G, K}$ of $G$-invariant metrics on $G/H$, sometimes also called $\Ad(K)$-invariant metrics.  The benefit of such metrics is that they are diagonal metrics on the homogeneous space.  The next proposition gives a possible way to choose such a subgroup $K$ of $G$.

\begin{prop}\label{subset}
Let $K$  be a subgroup of $G$ with $H\subset K \subset G$ and such that $K = L\times H$, for some subgroup $L$ of $G$.  Then $K$ is contained in $N_{G}(H)$.
\end{prop}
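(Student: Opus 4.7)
The plan is to unpack the meaning of the hypothesis $K = L \times H$. Interpreting this as an internal direct product inside $G$, it provides three pieces of information: first, every $k \in K$ admits a (unique) decomposition $k = \ell h$ with $\ell \in L$ and $h \in H$; second, $L \cap H = \{e\}$; and third, crucially, every element of $L$ commutes with every element of $H$ in $G$.

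To verify that $K \subset N_G(H)$, I would take an arbitrary $k \in K$ and show $k H k^{-1} = H$. Writing $k = \ell h$ as above, pick any $h' \in H$ and compute
\[
k h' k^{-1} = \ell h\, h'\, h^{-1} \ell^{-1}.
\]
Setting $h'' = h h' h^{-1}$, which lies in $H$ since $H$ is a subgroup, the elementwise commutativity of $L$ with $H$ gives $\ell h'' \ell^{-1} = h'' \ell \ell^{-1} = h''$. Thus $k h' k^{-1} = h'' \in H$, so $k H k^{-1} \subset H$. The reverse inclusion follows by applying the same argument to $k^{-1} \in K$ (since $K$ is a subgroup, $k^{-1}$ also has a decomposition $k^{-1} = \ell' h'''$ of the same type), which yields $k H k^{-1} = H$ and hence $k \in N_G(H)$.

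There is no real obstacle here; the statement is essentially a tautology once the direct product structure is made explicit. The only subtle point worth being explicit about is emphasizing that "$K = L \times H$" means an internal direct product inside $G$, so that $L$ and $H$ commute elementwise as subgroups of $G$ (rather than merely abstractly as in an external direct product). That commutation is precisely what allows one to slide $\ell$ past the conjugate $h h' h^{-1}$ in the computation above.
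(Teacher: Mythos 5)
Your argument is correct: the elementwise commutation of $L$ with $H$ coming from the internal direct product structure immediately gives $kHk^{-1}=H$ for every $k\in K$. The paper in fact omits any proof of this proposition, treating it as evident, and your computation is exactly the standard verification it implicitly relies on.
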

 
Now we describe the Ricci tensor for the diagonal metrics of the form (\ref{diagonia}).  Every $G$-invariant symmetric covariant 2-tensor on $G/H$ are of the same form as the Riemannian metrics (although they are not necessarly positive definite).  In particular, the Ricci tensor $r$ of a $G$-invariant Riemannian metric on $G/H$ is of the same form (\ref{diagonia}), that is
$$
r = r_1 x_1(-B)|_{\fr{m}_1} + \cdots r_{s} x_{s}(-B)|_{\fr{m}_s}.
$$
Let $\lbrace e_{\alpha} \rbrace$ be a $(-B)$-orthonormal basis adapted to the decomposition of $\frak m$, i.e. $e_{\alpha} \in {\frak m}_i$ for some $i$, and $\alpha < \beta$ if $i<j$.  We put ${A^\gamma_{\alpha \beta}}= -B\left(\left[e_{\alpha},e_{\beta}\right],e_{\gamma}\right)$ so that $\left[e_{\alpha},e_{\beta}\right]
= \displaystyle{\sum_{\gamma} A^\gamma_{\alpha \beta} e_{\gamma}}$ and set $A_{ijk}:=\displaystyle{k \brack {ij}}=\sum (A^\gamma_{\alpha \beta})^2$, where the sum is taken over all indices $\alpha, \beta, \gamma$ with $e_\alpha \in {\frak m}_i,\ e_\beta \in {\frak m}_j,\ e_\gamma \in {\frak m}_k$ (cf. \cite{WZ}).  Then the positive numbers $A_{ijk}$ are independent of the 
$(-B)$-orthonormal bases chosen for ${\frak m}_i, {\frak m}_j, {\frak m}_k$, and 
$A_{ijk}\ =\ A_{jik}\ =\ A_{kij}. $

Let $ d_k= \dim{\frak m}_{k}$. Then we have the following:

\begin{lemma}\label{ric2}\textnormal{(\cite{PS})}
The components ${ r}_{1}, \dots, {r}_{s}$ of the Ricci tensor ${r}$ of the metric $ \langle\cdot, \cdot \rangle$ of the
form {\em (\ref{diagonia})} on $G/H$ are given by 
\begin{equation}
{r}_k = \frac{1}{2x_k}+\frac{1}{4d_k}\sum_{j,i}
\frac{x_k}{x_j x_i} A_{jik}
-\frac{1}{2d_k}\sum_{j,i}\frac{x_j}{x_k x_i} A_{kij}
 \quad (k= 1,\ \dots, s),    \label{eq51}
\end{equation}
where the sum is taken over $i, j =1,\dots, s$.
\end{lemma}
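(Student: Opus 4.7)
\medskip\noindent
\textbf{Proof plan for Lemma~\ref{ric2}.}
My approach is to start from the general formula of Wang--Ziller \cite{WZ} (see also Besse, Ch.~7) for the Ricci curvature of a Riemannian homogeneous space with compact $G$, and specialize it to the diagonal $\mathrm{Ad}(H)$-invariant inner product (\ref{diagonia}). Since $r$ is an $\mathrm{Ad}(H)$-invariant symmetric $2$-tensor and (\ref{diaspasi}) coincides with the isotypic decomposition of $\fr m$, the Ricci tensor is diagonal in the same sense as the metric. Hence it suffices to compute $r(X,X)$ for a single $\langle\cdot,\cdot\rangle$-unit vector $X\in\fr m_k$, averaged over a basis of $\fr m_k$, and identify the result as $r_k$ (using $\langle X,X\rangle = x_k(-B)(X,X)=1$).

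\medskip
The computational step is to pass from the $(-B)$-orthonormal adapted basis $\{e_\alpha\}$ to the $\langle\cdot,\cdot\rangle$-orthonormal basis $\tilde e_\alpha := e_\alpha/\sqrt{x_i}$ for $e_\alpha\in\fr m_i$. Under this rescaling
$$\langle [\tilde e_\alpha,\tilde e_\beta]_{\fr m},\tilde e_\gamma\rangle \;=\; \frac{\sqrt{x_k}}{\sqrt{x_i x_j}}\, A^\gamma_{\alpha\beta}\qquad (e_\alpha\in\fr m_i,\ e_\beta\in\fr m_j,\ e_\gamma\in\fr m_k),$$
and $B(\tilde e_\gamma,\tilde e_\gamma)=-1/x_k$ on $\fr m_k$. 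Substituting into the Wang--Ziller formula with $X=\tilde e_\gamma$ and averaging over $\gamma$ ranging over a basis of $\fr m_k$ (dividing by $d_k$), the three contributions read: the Killing term $-\tfrac12 B(X,X)$ gives $\tfrac{1}{2x_k}$; the norm-squared term $-\tfrac12\sum_\alpha|[X,\tilde e_\alpha]_{\fr m}|^2$ gives $-\tfrac{1}{2d_k}\sum_{i,j}\tfrac{x_j}{x_k x_i}A_{kij}$; and the bracket-squared term $\tfrac14\sum_{\alpha,\beta}\langle[\tilde e_\alpha,\tilde e_\beta]_{\fr m},X\rangle^2$ gives $\tfrac{1}{4d_k}\sum_{i,j}\tfrac{x_k}{x_i x_j}A_{jik}$. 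The last two expressions arise from collecting $\sum_{\alpha\in\fr m_i,\beta\in\fr m_j,\gamma\in\fr m_k}(A^\gamma_{\alpha\beta})^2 = A_{ijk}$ together with the cyclic identity $A^\gamma_{\alpha\beta}=A^\beta_{\gamma\alpha}$ and the total symmetry $A_{ijk}=A_{jik}=A_{kij}$ to align and relabel indices.

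\medskip
The main obstacle is not conceptual but combinatorial: the careful bookkeeping of which of the three indices in each rescaled structure constant lies in which $\fr m_i$, and the repeated invocation of the symmetries of $A_{ijk}$ to merge the resulting mixed-index sums into the clean three-term expression (\ref{eq51}). Once this is in place, adding the three contributions above gives the lemma.
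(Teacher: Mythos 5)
Your derivation is correct, and it is essentially the argument behind the cited source: the paper itself quotes this lemma from \cite{PS} without proof, and the standard route there (and in \cite{WZ}) is exactly the specialization of the Wang--Ziller Ricci formula to a diagonal metric via the rescaled basis $\tilde e_\alpha = e_\alpha/\sqrt{x_i}$, with the three contributions you list producing the three terms of (\ref{eq51}). One small remark: your opening claim that $r$ is diagonal requires the summands $\fr{m}_i$ to be pairwise inequivalent (the paper invokes this only after the lemma, to set up the Einstein system); for the formula (\ref{eq51}) itself this is not needed, since you only ever evaluate $r(X,X)$ for $X\in\fr{m}_k$, which is exactly what your computation does.
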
 
Since by assumption the submodules $\fr{m}_{i}, \fr{m}_{j}$ in the decomposition (\ref{diaspasi}) are matually non equivalent for any $i\neq j$, it is $r(\fr{m}_{i}, \fr{m}_{j})=0$ whenever $i\neq j$.  Thus by Lemma \ref{ric2} it follows that $G$-invariant Einstein metrics on $M=G/H$ are exactly the positive real solutions $g=(x_1, \ldots, x_s)\in\bb{R}^{s}_{+}$  of the  polynomial system $\{r_1=\lambda, \ r_2=\lambda,  \ldots,  r_{s}=\lambda\}$, where $\lambda\in \bb{R}_{+}$ is the Einstein constant.

\section{The Stiefel manifold $V_{p}\bb{H}^{n}\cong \Sp(n)/\Sp(n-p)$}

We embed the Lie algebra 
\begin{equation*}
\mathfrak{sp}(n-p)=\left\{\begin{pmatrix}
X & -{}\bar{Y}\\
Y & \bar{X}
\end{pmatrix}\ \Big\vert 
 \begin{array}{l}X\in\mathfrak{u}(n-p), \\ 
Y\   (n-k)\times (n-p)\ \mbox{complex symmetric matrix}
\end{array}
\right\}
\end{equation*} 
of the Lie group $\Sp(n-p)$ in the Lie algebra $\fr{sp}(n)$ of $\Sp(n)$ as
\begin{equation*}
\fr{sp}(n-p)\ni \begin{pmatrix}
X & -{}\bar{Y}\\
Y & \bar{X}
\end{pmatrix} \hookrightarrow  \begin{pmatrix}
 0& 0 &\vline & 0& 0\\
 0 & X & \vline & 0 & -\bar{Y}\\
\hline
0 & 0 &\vline & 0 & 0  \\
0 & Y & \vline & 0 &  \bar{X}
 \end{pmatrix}  \in \fr{sp}(n).
\end{equation*}
The Killing form of $\fr{sp}(n)$ is $B(X, Y)=2(n+1)\tr XY$.  Then with respect to $-B$ we can find the $\Ad(\Sp(n-p))$-invariant subspace $\fr{m}\cong T_{o}(\Sp(n)/\Sp(n-p))$ such that $\fr{sp}(n) = \fr{sp}(n-p)\oplus\fr{m}$.

Next we review the isotropy representation of $G/H = \Sp(n)/\Sp(n-p)\cong$ $V_{p}\bb{H}^{n}$.  Let $\nu_{n} : \Sp(n) \to \Aut(\bb{C}^{2n})$ be the standard representation of  $\Sp(n)$ and $\Ad^{\Sp(n)}\otimes\bb{C} = S^{2}\nu_{n}$ the complexified adjoint representation of $\Sp(n)$, where $S^{2}$ is the second symmetric power.  For the isotropy representation $\chi : \Sp(n) \to \Aut(\fr{m})$ of the homogeneous space $\Sp(n)/\Sp(n-p)$
we have:
\begin{eqnarray*}
\Ad^{Sp(n)}\otimes\bb{C}\big|_{\Sp(n-p)} &=& S^{2}\nu_{n}\big|_{\Sp(n-p)} = S^{2}(\nu_{n-p}\oplus p\oplus p)\\
                                     &=& S^{2}\nu_{n-p} \oplus S^{2}(p\oplus p) \oplus \big(\nu_{n-p}\otimes(p \oplus p)\big)\\
                                     &=& S^{2}\nu_{n-p} \oplus S^{2}(p\oplus p) \oplus (\nu_{n-p}\otimes p)\oplus(\nu_{n-p}\otimes p)\\
                                     &=& S^{2}\nu_{n-p} \oplus S^{2}(p\oplus p) \oplus \underbrace{\nu_{n-p}\oplus\cdots\oplus \nu_{n-p}}_{p-\mbox{times}} \oplus\underbrace{\nu_{n-p}\oplus\cdots\oplus \nu_{n-p}}_{p-\mbox{times}}\\
                                     &=& S^{2}\nu_{n-p}\oplus\underbrace{1\oplus\cdots\oplus 1}_{{{2+2p-1}\choose{2}}-\mbox{times}}\oplus\underbrace{\nu_{n-p}\oplus\cdots\oplus \nu_{n-p}}_{2p-\mbox{times}}.
\end{eqnarray*}
In the first line above, $p$ denotes the direct sum $1\oplus \cdots\oplus1$ of 1-dimensional trivial representations. Hence
\begin{equation}\label{isotropiki}
\chi\otimes \bb{C} = 1\oplus\cdots\oplus 1\oplus\nu_{n-p}\oplus\cdots\oplus\nu_{n-p}. 
\end{equation}
This decomposition induces an $\Ad(\Sp(n-p))$-invariant decomposition of $\fr{m}\otimes\bb{C}$ as
\begin{equation}\label{diaspefapt}
\fr{m}\otimes\bb{C} = \fr{m}_{1}\oplus\fr{m}_{2}\oplus\cdots\oplus\fr{m}_{s},
\end{equation}
where the first $\displaystyle{{{2+2k-1}\choose{2}}}$ $\Ad(\Sp(n-p))$-submodules are 1-dimensional and the rest  $2p$ are $(n-p)$-dimensional.  Note that the decomposition (\ref{isotropiki}) contains equivalent subrepresentations so a complete description of all $\Sp(n)$-invariant metrics on $\Sp(n)/\Sp(n-p)$ is not easy. 

\subsection{$V_{p}\bb{H}^{n}$ as total space over a generalized Wallach space} 

Let $n = k_1 + k_2 + k_3$ and $p = k_1 + k_2$. We consider the closed subgroup $K = \Sp(k_1)\times\Sp(k_2)\times\Sp(k_3)$ of $\Sp(n)$.  Then from Proposition \ref{subset} $K$ is contained in $N_{\Sp(n)}(\Sp(n - p))$.  We consider the fibration 
$$
\Sp(k_1)\times\Sp(k_2) \to G/H=\Sp(n)/\Sp(n-p) \to \Sp(n)/(\Sp(k_1)\times\Sp(k_2)\times\Sp(k_3))
$$

Let $\frak{a}$ and $\frak{p}$ be the orthogonal complements of $\fr{sp}(k_3)$ in $\fr{sp}(k_1)\oplus\fr{sp}(k_2)\oplus\fr{sp}(k_3)$, and of $\fr{sp}(k_1)\oplus\fr{sp}(k_2)\oplus\fr{sp}(k_3)$ in $\fr{sp}(n)$, with respect to the negative of the Killing form of $\fr{sp}(n)$.  The spaces $\fr{a}$ and $\fr{p}$ are called \textit{vertical} and \textit{horizontal} subspaces of $\fr{g}$.  Hence the tangent space of quaternionic Stiefel manifold $G/H$ can be written as $\fr{m} = \fr{a}\oplus\fr{p}$.  We observe that $\fr{p}$ is the tangent space of the generalized Wallach space $G/K = \Sp(n)/(\Sp(k_1)\times\Sp(k_2)\times\Sp(k_3))$.  Actually for $i=1, 2, 3$, we embed the Lie subalgebras 
\begin{equation*}
\mathfrak{sp}(k_i)=\left\{\begin{pmatrix}
X_i & -{}\bar{Y}_i\\
Y_i & \bar{X}_i
\end{pmatrix}\ \Big\vert 
 \begin{array}{l}X_i\in\mathfrak{u}(k_i), \\ 
Y_i\ \mbox{ is \ a}\  k_i\times k_i\ \mbox{complex symmetric matrix}
\end{array}
\right\}\ 
\end{equation*} 
in the Lie algebra $\mathfrak{sp}(k_1+ k_2+k_3)$ as follows: 
{\small\small
\begin{equation*}
 \left\{\begin{pmatrix}
 X_1 & 0 & 0 &\vline & -\bar{Y}_{1} & 0 & 0\\
 0 & 0 & 0 & \vline & 0 & 0 & 0\\
0 & 0 & 0 & \vline & 0 & 0 & 0\\
\hline
Y_1 & 0 & 0 &\vline & \bar{X}_1 & 0 & 0\\
0 & 0 & 0 & \vline & 0 & 0 & 0\\
0 & 0 & 0 & \vline & 0 & 0 & 0
 \end{pmatrix} \right\}, 
  \left\{\begin{pmatrix}
0 & 0 & 0 &\vline &  0 & 0 & 0\\
 0 & X_2 & 0 & \vline & 0 &-\bar{Y}_{2}  & 0\\
0 & 0 & 0 & \vline & 0 & 0 & 0\\
\hline
0  & 0 & 0 &\vline & 0 & 0 & 0\\
0 & Y_2 & 0 & \vline & 0 & \bar{X}_2  & 0\\
0 & 0 & 0 & \vline & 0 & 0 & 0
 \end{pmatrix} \right\}, 
 \left\{\begin{pmatrix}
 0 & 0 & 0 &\vline & 0  & 0 & 0\\
 0 & 0 & 0 & \vline & 0 & 0 & 0\\
0 & 0 & X_3 & \vline & 0 & 0 & -\bar{Y}_{3}\\
\hline
0 & 0 & 0 &\vline & 0  & 0 & 0\\
0 & 0 & 0 & \vline & 0 & 0 & 0\\
0 & 0 & Y_3 & \vline & 0 & 0 & \bar{X}_3
 \end{pmatrix} \right\}.    
 \end{equation*} 
 }
Then the tangent space $\mathfrak{p}$ of $G/K$ is given by  $\mathfrak{k}^{\perp} $ in $\mathfrak{g} = \mathfrak{sp}(k_1+ k_2+k_3)$ with respect to the $\Ad(G)$-invariant inner product $-B$.  If we denote by $M(a,b)$ the set of all $a \times b$ matrices, then we see that $\mathfrak{p}$ is given by  
\begin{equation*}
\mathfrak{p}=  {\small \left\{\begin{pmatrix}
 0 & {A}_{12} & {A}_{13} & 0 & -\bar{B}_{12} & -\bar{B}_{13}\\
 -{}^{t}_{}\!\bar{A}_{12} & 0 & {A}_{23} & -{}^{t}_{}\!\bar{B}_{12} & 0 & -\bar{B}_{23}\\
 -{}^{t}_{}\!\bar{A}_{13} & -{}^{t}_{}\!\bar{A}_{23} & 
    0 & -{}^{t}_{}\!\bar{B}_{13} & -{}^{t}_{}\!\bar{B}_{23} & 0\\
0 & {B}_{12} & {B}_{13} & 0 & \bar{A}_{12} & \bar{A}_{13}\\
 {}^{t}_{}\!{B}_{12} & 0 & {B}_{23} & -{}^{t}_{}\!{A}_{12} & 0 & \bar{A}_{23}\\
 {}^{t}_{}\!{B}_{13} & {}^{t}_{}\!{B}_{23} & 
    0 & -{}^{t}_{}\!{A}_{13} & -{}^{t}_{}\!{A}_{23} & 0
 \end{pmatrix} \  \Bigg\vert \  \begin{array}{l} {A}_{ij}, {B}_{ij} \in M(k_i, k_j)  \\  ( 1 \leq i < j \leq 3  )
  \end{array} \right\}. } 
 \end{equation*}

If $k_1, k_2, k_3$ are distinct then the isotropy representation of $G/K$ can be written as a direct sum of three non equivalent subrepresentations.  More precisely, let $p_{i} = \nu_{k_{i}}\circ\sigma_{k_{i}}$ be the standard representation of $K$ i.e.  
$$ \Sp(k_{1})\times\Sp(k_{2})\times\Sp(k_{3})\stackrel{\sigma_{k_{i}}}{\longrightarrow}\Sp(k_{i})\stackrel{\nu_{k_{i}}}{\longrightarrow}\Aut(\bb{C}^{2k_{i}}).
$$
By using the relation $\Ad^{G}\otimes\bb{C}|_{K} = (\Ad^{K}\otimes\bb{C})\oplus(\chi\otimes\bb{C})$ we have that
\begin{eqnarray*}
\Ad^{G}\otimes\bb{C}\big|_{K} &=& S^{2}\nu_{k_{1}+k_{2}+k_{3}}\big|_{K} =  S^{2}(p_{k_{1}} \oplus p_{k_{2}} \oplus p_{k_{3}})\\
&=&S^{2}p_{k_{1}} \oplus S^{2}p_{k_{2}} \oplus S^{2}p_{k_{3}} \oplus (p_{k_{2}}\otimes p_{k_{3}})\oplus (p_{k_{1}}\otimes p_{k_{2}}) \oplus (p_{k_{1}}\otimes p_{k_{3}})\\
&=& (\Ad^{K}\otimes\bb{C})\oplus(p_{k_{1}}\otimes p_{k_{2}})\oplus (p_{k_{1}}\otimes p_{k_{3}})\oplus (p_{k_{2}}\otimes p_{k_{3}}).
\end{eqnarray*}
Hence 
$
\chi\otimes\bb{C} = (p_{k_{1}}\otimes p_{k_{2}})\oplus (p_{k_{1}}\otimes p_{k_{3}})\oplus (p_{k_{2}}\otimes p_{k_{3}})
$
where the dimensions of the subrepresentations are respectively $2k_{1}k_{2},$ $2k_{1}k_{3}$ and $2k_{2}k_{3}$.  Therefore, the complexified tangent space of  $G/K$ is expressed as a direct sum of three non equivalent irreducible submodules as  $\fr{n}\otimes\bb{C} = \fr{n}_{1} \oplus \fr{n}_{2}\oplus \fr{n}_{3}$.  Hence, the real tangent space can be written as
$\fr{p} = \fr{p}_{12} \oplus\fr{p}_{13}\oplus\fr{p}_{23},$ where $\fr{p}_{12}\otimes\bb{C} = \fr{n}_{1}, \fr{p}_{13}\otimes\bb{C} = \fr{n}_{2}, \fr{p}_{23}\otimes\bb{C} = \fr{n}_{3}$ 
and the dimensions of $\fr{p}_{ij}, i\neq j$ are $\dim(\fr{p}_{ij}) = 4k_{i}k_{j}$.  We set
\begin{equation*}
 \mathfrak{p}_{12}= \left\{\begin{pmatrix}
 0 & {A}_{12} & 0 & 0 & -\bar{B}_{12} & 0\\
 -{}^{t}_{}\!\bar{A}_{12} & 0 & 0 & -{}^{t}_{}\!\bar{B}_{12} & 0 & 0\\
 0 & 0 & 0 & 0 & 0 & 0\\
0 & {B}_{12} & 0 & 0 & \bar{A}_{12} & 0\\
 {}^{t}_{}\!{B}_{12} & 0 & 0 & -{}^{t}_{}\!{A}_{12} & 0 & 0\\
 0 & 0 & 
    0 & 0 & 0 & 0
 \end{pmatrix}  \Bigg\vert \   {A}_{12}, {B}_{12} \in M(k_1, k_2) 
 \right\},
  \end{equation*} 
 \begin{equation*}  
 \mathfrak{p}_{13}= \left\{\begin{pmatrix}
 0 & 0 & {A}_{13} & 0 & 0 & -\bar{B}_{13}\\
 0 & 0 & 0 & 0 & 0 & 0\\
 -{}^{t}_{}\!\bar{A}_{13} & 0 & 
    0 & -{}^{t}_{}\!\bar{B}_{13} & 0 & 0\\
0 & 0 & {B}_{13} & 0 & 0 & \bar{A}_{13}\\
 0 & 0 & 0 & 0 & 0 & 0\\
 {}^{t}_{}\!{B}_{13} & 0 & 
    0 & -{}^{t}_{}\!{A}_{13} & 0 & 0
 \end{pmatrix} \Bigg\vert \   {A}_{13}, {B}_{13} \in M(k_1, k_3) 
 \right\},
 \end{equation*} 
 \begin{equation*}
 \mathfrak{p}_{23}= \left\{\begin{pmatrix}
 0 & 0 & 0 & 0 & 0 & 0\\
 0 & 0 & {A}_{23} & 0 & 0 & -\bar{B}_{23}\\
 0 & -{}^{t}_{}\!\bar{A}_{23} & 
    0 & 0 & -{}^{t}_{}\!\bar{B}_{23} & 0\\
0 & 0 & 0 & 0 & 0 & 0\\
 0 & 0 & {B}_{23} & 0 & 0 & \bar{A}_{23}\\
 0 & {}^{t}_{}\!{B}_{23} & 
    0 & 0 & -{}^{t}_{}\!{A}_{23} & 0
 \end{pmatrix} \Bigg\vert \   {A}_{23}, {B}_{23} \in M(k_2, k_3) 
 \right\}.
 \end{equation*}

Hence for the tangent space $\fr{m}$ of $G/H$ we have the decomposition 
\begin{equation}\label{lie}
\fr{m} = \fr{a}\oplus\fr{p} = \fr{sp}(k_1)\oplus\fr{sp}(k_2)\oplus\fr{p}_{12}\oplus\fr{p}_{13}\oplus\fr{p}_{23}.
\end{equation}
So for $k_i$ distinct the $G$-invariant metrics $\check{g}$ on $G/K$ are determined by the following $\Ad(K)$-invariant scalar products on $\fr{p}$:
$$
(\cdot,\cdot) = x_{12}(-B)|_{\fr{p}_{12}} + x_{13}(-B)|_{\fr{p}_{13}} + x_{23}(-B)|_{\fr{p}_{23}}.
$$
Also, any $\Ad(H)$-invariant inner product on $\fr{a}$ defines a $K$-invariant metric $\hat{g}$ on $K/H$.  The direct sum of these inner products on $\fr{m} = \fr{a}\oplus\fr{p}$ defines a $G$-invariant metric
$g =\hat{g}+\check{g}$
on the Stiefel manifold $G/H$, called \textit{submersion metric}.  This metric is determined by the following $\Ad(K)$-invariant inner product on $\fr{m}$:
\begin{equation} \label{metric1} 
\begin{array}{lll}
\langle\cdot, \cdot\rangle &=& x_1 \, (-B)|_{\fr{sp}(k_1)}+ x_2 \, (-B)|_{ \fr{sp}(k_2)} 
+ x_{12} \, (-B)|_{ \fr{p}_{12}}+ x_{13} \,  (-B)|_{ \fr{p}_{13}} + x_{23} \,  (-B)|_{ \fr{p}_{23}},
\end{array}
\end{equation}
where $x_1, x_2, x_{ij}$, $i,j = 1,2,3$ with $i\neq j$, belong to $\bb{R}^{+}$.  In general the submersion metric corresponds to an element of $(\mathcal{M}^G)^{\Phi_K}$, as defined in Section 2.  

We set in the decomposition (\ref{lie}) $\fr{sp}(k_1) = \fr{p}_{1}$ and $\fr{sp}(k_2) = \fr{p}_{2}$.  Then we see that the following relations hold:
\begin{lemma}\label{brackets}  
The submodules in the decomposition {\em (\ref{lie})} satisfy the following bracket relations$:$ 
\begin{center}
\begin{tabular}{lll}
$[ \fr{p}_1, \fr{p}_1] = \fr{p}_1,$  &   $[ \fr{p}_2, \fr{p}_2] = \fr{p}_2,$  & $[ \fr{p}_1, \fr{p}_{12}] = \fr{p}_{12},$ \\
 $[ \fr{p}_1, \fr{p}_{13}] = \fr{p}_{13},$  & $[\fr{p}_2, \fr{p}_{12}] =  \fr{p}_{12},$    & $[ \fr{p}_2, \fr{p}_{23}] =\fr{p}_{23},$
\\ 
$ [ \fr{p}_{12}, \fr{p}_{23}] \subset \fr{p}_{13},$   &   $[ \fr{p}_{13}, \fr{p}_{23}] \subset \fr{p}_{12},$  &
 $[\fr{p}_{12}, \fr{p}_{13}] \subset \fr{p}_{23},$\\ 
$[ \fr{p}_{12}, \fr{p}_{12}] \subset \fr{p}_{1}\oplus  \fr{p}_{2},$   &   $[ \fr{p}_{13}, \fr{p}_{13}] \subset \fr{p}_{1} \oplus \fr{p}_{3},$  &
 $[\fr{p}_{23}, \fr{p}_{23}] \subset \fr{p}_{2} \oplus \fr{p}_{3}$, 
\end{tabular}
\end{center}  
and the other bracket relations are zero.
\end{lemma}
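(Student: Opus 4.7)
The strategy is to combine the explicit block-matrix realizations of the submodules given above with the observation that $G/K = \Sp(n)/(\Sp(k_1)\times\Sp(k_2)\times\Sp(k_3))$ is a generalized Wallach space. The key point is that since $\fr{h}=\fr{sp}(k_3)=\fr{p}_3$, the subalgebras $\fr{p}_1=\fr{sp}(k_1)$ and $\fr{p}_2=\fr{sp}(k_2)$ lie in $\fr{m}$, whereas $\fr{p}_3$ lies in $\fr{h}$. Consequently the bracket structure of $\fr{m}$ inherits the Wallach-type relations from $\fr{g}$, but with $\fr{p}_3$ playing a ``hidden'' role inside $\fr{h}$.

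First I would treat the brackets among $\fr{p}_{12},\fr{p}_{13},\fr{p}_{23}$. Each $\fr{p}_{ij}$ is $\Ad(K)$-invariant and, as a complex $K$-module, isomorphic to $p_{k_i}\otimes p_{k_j}$; these three modules are pairwise inequivalent. The bracket $[\cdot,\cdot]\colon\fr{p}_{ij}\otimes\fr{p}_{kl}\to\fr{g}$ is $K$-equivariant, so by Schur's lemma (together with the decomposition $\fr{g}=\bigoplus_i\fr{p}_i\oplus\bigoplus_{i<j}\fr{p}_{ij}$) the image must land in the unique summand carrying a matching isotypic component. This forces $[\fr{p}_{ij},\fr{p}_{jk}]\subset \fr{p}_{ik}$ for distinct $i,j,k\in\{1,2,3\}$, and $[\fr{p}_{ij},\fr{p}_{ij}]\subset \fr{p}_i\oplus\fr{p}_j$ (the latter because $(p_{k_i}\otimes p_{k_j})^{\otimes 2}$ contains $K$-invariants only in $S^2 p_{k_i}$ and $S^2 p_{k_j}$, which realize $\fr{sp}(k_i)$ and $\fr{sp}(k_j)$). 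As a sanity check, each of these inclusions can be confirmed by block-multiplying two generic matrices in the explicit realizations of $\fr{p}_{12},\fr{p}_{13},\fr{p}_{23}$ displayed before the lemma.

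Next, for brackets involving $\fr{p}_1$ and $\fr{p}_2$: the inclusions $[\fr{p}_i,\fr{p}_{ij}]\subset\fr{p}_{ij}$ and $[\fr{p}_i,\fr{p}_{ik}]\subset\fr{p}_{ik}$ are immediate from $\Ad(K)$-invariance of each $\fr{p}_{ij}$, and equality holds because $\fr{p}_{ij}$ is a nontrivial irreducible $\fr{sp}(k_i)$-module, so the image of $\mathrm{ad}\colon \fr{sp}(k_i)\otimes\fr{p}_{ij}\to\fr{p}_{ij}$ is either zero or all of $\fr{p}_{ij}$; the former is excluded because the representation is nontrivial. The relations $[\fr{p}_1,\fr{p}_1]=\fr{p}_1$ and $[\fr{p}_2,\fr{p}_2]=\fr{p}_2$ follow since $\fr{sp}(k_i)$ is simple.

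Finally, the remaining brackets vanish: $[\fr{p}_1,\fr{p}_2]=0$ is visible from the disjoint block supports, while $[\fr{p}_1,\fr{p}_{23}]=0=[\fr{p}_2,\fr{p}_{13}]$ since $\fr{p}_{23}$ (resp.\ $\fr{p}_{13}$) is a trivial $\fr{sp}(k_1)$-module (resp.\ $\fr{sp}(k_2)$-module). The main obstacle is essentially bookkeeping: one must be careful to distinguish the generalized Wallach-type bracket $[\fr{p}_{ij},\fr{p}_{ij}]\subset\fr{p}_i\oplus\fr{p}_j$ from its consequence inside $\fr{m}$, where the component in $\fr{p}_3$ (if any) is suppressed because $\fr{p}_3\subset\fr{h}$. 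The cleanest uniform verification is the representation-theoretic one sketched above; the matrix computation is routine and serves only as a confirmation.
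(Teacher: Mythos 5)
Your proposal is correct. The paper itself offers no written proof of this lemma: it simply displays the explicit block--matrix realizations of $\fr{p}_1=\fr{sp}(k_1)$, $\fr{p}_2=\fr{sp}(k_2)$ and $\fr{p}_{12},\fr{p}_{13},\fr{p}_{23}$ and then asserts the relations, so the intended verification is the direct block multiplication that you relegate to a ``sanity check.'' Your primary argument is genuinely different in flavour: you use $\Ad(K)$-equivariance of the bracket together with Schur's lemma and the isotypic decomposition $\fr{g}\otimes\bb{C}=\bigoplus_l S^2p_{k_l}\oplus\bigoplus_{i<j}(p_{k_i}\otimes p_{k_j})$ to locate where each bracket can land, which explains \emph{why} the table has this form and generalizes immediately to other $k_i$ and to other groups; the matrix computation, by contrast, is concrete but uninformative. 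Your treatment of the equalities (image of a nonzero equivariant map into an irreducible module, simplicity of $\fr{sp}(k_i)$) and of the vanishing brackets $[\fr{p}_1,\fr{p}_2]=[\fr{p}_1,\fr{p}_{23}]=[\fr{p}_2,\fr{p}_{13}]=0$ is sound, and you correctly flag the one point a reader could trip on, namely that the $\fr{p}_3=\fr{sp}(k_3)=\fr{h}$ components of $[\fr{p}_{13},\fr{p}_{13}]$ and $[\fr{p}_{23},\fr{p}_{23}]$ live in the isotropy algebra rather than in $\fr{m}$ (which is exactly why they do not contribute structure constants $A_{ijk}$ for the metric (\ref{metric1}), consistent with the list (\ref{triplets})). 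One small wording quibble: ``$(p_{k_i}\otimes p_{k_j})^{\otimes 2}$ contains $K$-invariants only in $S^2p_{k_i}$ and $S^2p_{k_j}$'' should read that $\Hom_K\bigl((p_{k_i}\otimes p_{k_j})^{\otimes 2},\,\fr{g}\otimes\bb{C}\bigr)$ is supported only on those two summands, i.e.\ that among the constituents of $\fr{g}\otimes\bb{C}$ only $S^2p_{k_i}$ and $S^2p_{k_j}$ occur in $p_{k_i}^{\otimes 2}\otimes p_{k_j}^{\otimes 2}$; as stated it is slightly garbled, but the intended argument is clearly the right one.
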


\subsection{$V_{p}\bb{H}^{n}$ as total space over a generalized flag manifold}

Let $n = k_1+k_2+k_3$ and $p = k_1+k_2$.  Now we consider the closed subgroup $K = \U(p)\times\Sp(n-p)$ of $\Sp(n)$.  From Proposition \ref{subset} we have that $K\subset N_{\Sp(n)}(\Sp(n-p))$.  We consider the fibration
$$
\U(p) \to G/H=\Sp(n)/\Sp(n-p) \to \Sp(n)/(\U(p)\times\Sp(n-p)).
$$
The fiber $U(p)$ is diffeomorphic to the Lie group $\U(1)\times\SU(p)$ so the vertical subspace $\fr{h}$ of $\fr{sp}(n)$ is written as direct sum $\fr{h} = \fr{h}_0\oplus\fr{h}_1$, where $\fr{h}_0$ is the center of $\U(p)$.  We set $d_0 = \dim(\fr{h}_0) = 1$ and $d_1 = \dim(\fr{h}_1) = p^{2} - 1$.  We also observe that the horizontal subspace $\fr{p}$ of $\fr{sp}(n)$ is the tangent space of generalized flag manifold $\Sp(n)/(\U(p)\times\Sp(n-p))$.  So the isotropy representation of this space given as follows:
\begin{eqnarray*}
\Ad^{\Sp(n)}\otimes\bb{C}\big|_{\U(p)\times\Sp(n-p)} &=& S^{2}\nu_{n}\big|_{\U(p)\times\Sp(n-p)} = S^{2}(\mu_{p}\oplus\bar{\mu}_{p}\oplus\nu_{n-p})\\
&=& S^{2}\nu_{n-p}\oplus (\mu_{p}\otimes\bar{\mu}_{p}) \oplus S^{2}\mu_{p}\oplus S^{2}\bar{\mu}_{p} \oplus (\mu_{p}\otimes\nu_{n-p})\oplus(\bar{\mu}_{p}\otimes\nu_{n-p})\\
&=& \Ad^{\U(p)\times\Sp(n-p)}\otimes\bb{C} \oplus S^{2}\mu_{p}\oplus S^{2}\bar{\mu}_{p} \oplus (\mu_{p}\otimes\nu_{n-p})\oplus(\bar{\mu}_{p}\otimes\nu_{n-p}).
\end{eqnarray*}
In the above calculations $\mu_{p} : \U(p)\to \Aut(\bb{C}^{p})$ is the standard representation of Lie group $\U(p)$ and $\Ad^{\U(p)}\otimes\bb{C} = \mu_{p}\otimes\bar{\mu}_{p}$ the complexified adjoint representation of $\U(p)$.  Therefore the complexified isotropy representation of the generalized flag manifold $G/K$ is $\chi\otimes\bb{C} = S^{2}\mu_{p}\oplus S^{2}\bar{\mu}_{p} \oplus (\mu_{p}\otimes\nu_{n-p})\oplus(\bar{\mu}_{p}\otimes\nu_{n-p})$.  The dimension of the first 
two subrepresentations is $2p(n-p)$ and of the rest two is $\binom{p+1}{2}$.  Also the representations $\mu_{p}\otimes\nu_{n-p}$ and $\bar{\mu}_{p}\otimes\nu_{n-p}$ are conjugate to each other and the same holds for the representations $S^{2}\mu_{p}$ and $S^{2}\bar{\mu}_{p}$.  Thus $\fr{p}$ decomposes in two real $\Ad(K)$-invariant irreducible submodules $\fr{p}_1$ and $\fr{p}_2$
of dimension $d_2 = \dim(\fr{p}_1) = 4p(n-p)$ and $d_3 = \dim(\fr{p}_3) = p(p+1)$.  So the tangent space $\fr{m}$ of the  Stiefel manifold $\Sp(n)/\Sp(n-p)$ can be expressed as
\begin{eqnarray}\label{diaspasi1}
\fr{m} = \fr{h}\oplus\fr{p} = \fr{h}_0\oplus\fr{h}_1\oplus\fr{p}_1\oplus\fr{p}_2 = \fr{n}_0\oplus\fr{n}_1\oplus\fr{n}_2\oplus\fr{n}_3.
\end{eqnarray}
In this case the submersion metric on the Stiefell manifold $G/H$ is determined by the following $\Ad(K)$-invarinat inner product on $\fr{m}$:
\begin{equation}\label{metric2}
\langle\cdot,\cdot\rangle = u_{0}(-B)|_{\fr{n}_0} + u_1(-B)|_{\fr{n}_1} + u_2(-B)|_{\fr{n}_2} + u_3(-B)|_{\fr{n}_3},
\end{equation}
where $u_i, i = 0, 1, 2, 3$ belong to $\bb{R}^{+}$.  It is easy to see that the following relations hold: 
\begin{equation}\label{brackets2}
[\fr{n}_2, \fr{n}_2]\subset\fr{h}\oplus\fr{n}_3, \quad [\fr{n}_3, \fr{n}_3]\subset\fr{h}, \quad [\fr{n}_2, \fr{n}_3]\subset\fr{n}_2.
\end{equation}

\subsection{The Ricci tensor for metrics corresponding to inner products (\ref{metric1}) and (\ref{metric2})} 
From Lemma \ref{brackets} we see that the only non zero triples (up to permutation of indices) for the metric corresponding to (\ref{metric1}) are
\begin{equation}\label{triplets}
A_{111}, \quad A_{222}, \quad A_{1(12)(12)}, \quad A_{1(13)(13)}, \quad A_{2(12)(12)}, \quad A_{2(23)(23)}, \quad A_{(12)(23)(13)}
\end{equation}

We recall the following result  by A. Arvanitoyeorgos, V.V. Dzhepko and Yu.G. Nikonorov: 
\begin{lemma}\label{brac1} {\em (\cite{ADN1}, \cite{ADN2})} 
For $a, b, c = 1, 2, 3$ and $(a - b)(b - c) (c - a) \neq 0$ the following relations hold$:$
\begin{equation*}\label{eq14}
\begin{array}{lll} 
\displaystyle{A_{aaa} = \frac{k_a (k_a +1)(2k_a +1)}{n +1}},   &  \displaystyle{A_{(ab)(ab)a} = \frac{k_a  k_b (2k_a +1)}{(n +1)}}, 
 & \displaystyle{A_{(ab)(bc)(ac)} = \frac{2k_a  k_b  k_c}{n +1}}.  
\end{array} 
\end{equation*}
\end{lemma}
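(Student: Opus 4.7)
\medskip

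\noindent\emph{Proof plan.}
The plan is to verify each of the three families of triples by reducing to standard computations on compact simple Lie algebras, using the normalization $B(X,Y)=2(n+1)\tr(XY)$ of the Killing form of $\fr{sp}(n)$ and the explicit block realizations from Section 3.

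For $A_{aaa}$, since $\fr{p}_a=\fr{sp}(k_a)$ is a simple subalgebra whose own Killing form is $2(k_a+1)\tr$, the restriction of $-B$ to $\fr{p}_a$ equals $(n+1)/(k_a+1)$ times the intrinsic Killing form of $\fr{sp}(k_a)$. A $-B$-orthonormal basis of $\fr{p}_a$ is therefore obtained from a basis orthonormal for the intrinsic form by rescaling by the scalar $\sqrt{(k_a+1)/(n+1)}$, and the squared structure constants rescale by the same factor. Applying the classical identity
\[
\sum_{\alpha,\beta,\gamma}(C^{\gamma}_{\alpha\beta})^{2}\;=\;\dim\fr{g},
\]
valid in a basis orthonormal for $-B_{\fr{g}}$ on any compact simple $\fr{g}$ (an immediate consequence of $\sum_\alpha(\ad f_\alpha)^{2}=-\Id$, i.e.\ of the Casimir of the adjoint representation being the identity in the Killing normalization), to $\fr{g}=\fr{sp}(k_a)$ of dimension $k_a(2k_a+1)$ then yields $A_{aaa}=k_a(k_a+1)(2k_a+1)/(n+1)$.

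For $A_{(ab)(ab)a}$, note that $\rho(X)\colon v\mapsto[X,v]$ is skew-symmetric on $(\fr{p}_{ab},-B|_{\fr{p}_{ab}})$ for every $X\in\fr{sp}(k_a)$. Combining this with $A^{\gamma}_{\alpha\beta}=-B([e_\alpha,e_\beta],e_\gamma)$ and $B$-invariance of the bracket rewrites the triple as a Casimir trace
\[
A_{(ab)(ab)a}\;=\;-\tr_{\fr{p}_{ab}}\!\Bigl(\sum_\gamma\rho(e_\gamma)^{2}\Bigr),
\]
with $\gamma$ running over a $-B$-orthonormal basis of $\fr{p}_a$. Under left matrix multiplication, $\fr{p}_{ab}$ is $k_b$ real copies of the standard quaternionic representation of $\fr{sp}(k_a)$ on $\bb{H}^{k_a}\cong\bb{C}^{2k_a}$. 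Its Casimir eigenvalue in the ambient scaling is $-(2k_a+1)/(4(n+1))$, which one reads off directly from $\sum_\gamma\tr(e_\gamma^{2})=-k_a(2k_a+1)/(2(n+1))$ divided by the complex dimension $2k_a$. Multiplying by $-\dim_{\bb{R}}\fr{p}_{ab}=-4k_ak_b$ produces the stated value $k_ak_b(2k_a+1)/(n+1)$.

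For the fully off-diagonal triple $A_{(ab)(bc)(ac)}$, I work directly in the block realization. Choose in each $\fr{p}_{ij}$ a basis consisting of real and imaginary combinations of matrix units placed in the prescribed $(i,j)$- and $(i,j+3)$-blocks, uniformly rescaled by $1/(2\sqrt{n+1})$ so that $-B=1$ on each basis vector. The commutator $[e_{ab},e_{bc}]$ reduces to a matrix product lying in $\fr{p}_{ac}$; pairing it with a basis vector of $\fr{p}_{ac}$ produces a $0$ or $\pm$ rescaled structure constant. A direct enumeration---four contributing pairs come from the block products $A_{ab}A_{bc}$, $A_{ab}\bar B_{bc}$, $B_{ab}\bar A_{bc}$, $B_{ab}\bar B_{bc}$---and a sum of squares yield $2k_ak_bk_c/(n+1)$. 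The chief obstacle throughout is normalization bookkeeping: the quaternionic embedding doubles every matrix-entry index into an $X$- and an $\bar X$-block, so one must verify that, as a real $\fr{sp}(k_a)$-module, $\fr{p}_{ab}$ is $k_b$ copies of the quaternionic standard representation of real dimension $4k_a$ (not $2k_b$ copies of $\bb{R}^{2k_a}$), and that the rescaling between the intrinsic Killing form of $\fr{sp}(k_a)$ and its restriction from $\fr{sp}(n)$ is $(n+1)/(k_a+1)$ rather than $(n+1)/k_a$. Once these normalizations are aligned, the three formulas reduce respectively to (i) the dimension of $\fr{sp}(k_a)$, (ii) the Casimir of its standard representation, and (iii) a structure-constant count.
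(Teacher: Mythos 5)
The paper does not prove this lemma at all: it is quoted verbatim from \cite{ADN1}, \cite{ADN2}, so there is no internal argument to compare against. Your verification is essentially correct and self-contained. The key constants all check out: the restriction of $-B_{\fr{sp}(n)}=-2(n+1)\tr$ to $\fr{sp}(k_a)$ is indeed $\tfrac{n+1}{k_a+1}$ times the intrinsic negative Killing form $-2(k_a+1)\tr$, so the squared structure constants in a $-B$-orthonormal basis scale by $\tfrac{k_a+1}{n+1}$ and the identity $\sum(C^\gamma_{\alpha\beta})^2=\dim\fr{g}$ gives $A_{aaa}$; and $\fr{p}_{ab}$ really is $k_b$ real copies of $\bb{C}^{2k_a}$ under left block multiplication, with $\sum_\gamma e_\gamma^2=-\tfrac{2k_a+1}{4(n+1)}I_{2k_a}$ since each $\tr(e_\gamma^2)=-\tfrac{1}{2(n+1)}$ and there are $k_a(2k_a+1)$ basis vectors, which yields $A_{(ab)(ab)a}$.

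Two small points on the third family. First, your normalization is off by $\sqrt{2}$: for the element of $\fr{p}_{ij}$ with $A_{ij}=E_{st}$, $B_{ij}=0$ one has $\tr(v^2)=-4$, hence $-B(v,v)=8(n+1)$, so the correct rescaling is $1/(2\sqrt{2(n+1)})$, not $1/(2\sqrt{n+1})$; since you only sketch the enumeration, this would silently corrupt the final count by a factor of $4$ if carried through as written. Second, you can avoid the block enumeration entirely: for any $-B$-orthonormal basis of a compact simple $\fr{g}$ one has $\sum_{\beta,\gamma}(A^\gamma_{\alpha\beta})^2=1$ for each fixed $\alpha$, so summing over $\alpha\in\fr{p}_{ab}$ and using the bracket relations of Lemma \ref{brackets} gives
\[
2A_{(ab)(ab)a}+2A_{(ab)(ab)b}+2A_{(ab)(ac)(bc)}=\dim\fr{p}_{ab}=4k_ak_b,
\]
and substituting the already-proved values of the first two triples forces $A_{(ab)(bc)(ac)}=2k_ak_bk_c/(n+1)$. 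This closes the only incomplete step of your plan using nothing beyond parts (i) and (ii).
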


Now for the metric corresponding to (\ref{metric2}) we see that from  relations (\ref{brackets2}) and  [\cite{ArMoSa}, Proposition 6, p.\, 269]  the only non zero triples are: 
$$
A_{220}, \quad A_{330}, \quad A_{111}, \quad A_{122}, \quad A_{133}, \quad A_{322}.
$$
From A. Arvanitoyeorgos, K. Mori and Y. Sakane we have the following:
\begin{lemma}\label{brac2}{\em (\cite{ArMoSa})} The triples $A_{ijk}$ are given as follows:
\begin{equation*}\label{eq15}
\begin{array}{lll}
\displaystyle{A_{220} = \frac{d_2}{(d_2 + 4d_3)}}, & \displaystyle{A_{330} = \frac{4d_3}{(d_2 + 4d_3)}} &
\displaystyle{A_{111} = \frac{2d_3(2d_1 + 2 - d_3)}{(d_2 +4d_3)}}  \vspace{3pt}\\ 
\displaystyle{A_{122} = \frac{d_1d_2}{(d_2 + 4d_3)}} &
\displaystyle{A_{133} = \frac{2d_3(d_3 - 2)}{(d_2 + 2d_3)}} & \displaystyle{A_{322} = \frac{d_2d_3}{(d_2 + 4d_3)}}.
\end{array} 
\end{equation*}
\end{lemma}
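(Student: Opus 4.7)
The plan is to mix explicit bracket computations in $\fr{sp}(n)$, using $B(X,Y) = 2(n+1)\tr XY$, with Casimir identities on the $\Ad(K)$-irreducible modules $\fr{n}_{2}$ and $\fr{n}_{3}$. I fix a $(-B)$-orthonormal basis adapted to the decomposition $\fr{m}=\fr{n}_{0}\oplus\fr{n}_{1}\oplus\fr{n}_{2}\oplus\fr{n}_{3}$, and let $H_{0}$ be a $(-B)$-unit generator of the one-dimensional center $\fr{n}_{0}$, taken proportional to the image in $\fr{sp}(n)$ of a generator of the center of $\fr{u}(p)$.

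The pair $A_{220}$, $A_{330}$ is handled through the action of the center. Since $\fr{n}_{2}\cong (\mu_{p}\otimes\nu_{n-p})_{\bb{R}}$ and $\fr{n}_{3}\cong (S^{2}\mu_{p})_{\bb{R}}$ carry center-weights $\pm 1$ and $\pm 2$ respectively, $\ad(H_{0})^{2}$ acts as $-\mu^{2}\mathrm{Id}$ on $\fr{n}_{2}$ and $-4\mu^{2}\mathrm{Id}$ on $\fr{n}_{3}$ for a single scalar $\mu>0$, while vanishing on $\fr{n}_{0}\oplus\fr{n}_{1}$. Skew-symmetry of $\ad(H_{0})$ with respect to $-B$ then yields $A_{220}=\mu^{2}d_{2}$ and $A_{330}=4\mu^{2}d_{3}$; the normalization $\tr\ad(H_{0})^{2}=B(H_{0},H_{0})=-1$ forces $\mu^{2}(d_{2}+4d_{3})=1$, reproducing the claimed formulas.

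For the remaining triples I would rely on Casimir-type identities combined with the bracket relations (\ref{brackets2}). The intrinsic triple $A_{111}$ is computed from the Casimir of $\fr{su}(p)$ on its own adjoint, rescaled by the conversion factor between $-B_{\fr{sp}(n)}|_{\fr{su}(p)}$ and $-B_{\fr{su}(p)}$ read off from $B(X,Y)=2(n+1)\tr XY$. Writing $c_{i}$ for the eigenvalue of the Casimir of $\fr{h}=\fr{u}(p)$ on $\fr{n}_{i}$ with respect to $-B|_{\fr{h}}$, the relations
\begin{equation*}
A_{220}+A_{122}= c_{2}\, d_{2}, \qquad A_{330}+A_{133}= c_{3}\, d_{3}
\end{equation*}
deliver $A_{122}$ and $A_{133}$ once $c_{2}, c_{3}$ are identified from the standard Casimir eigenvalues of $\fr{u}(p)$ on $\mu_{p}\otimes\nu_{n-p}$ and on $S^{2}\mu_{p}$. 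Finally $A_{322}$ comes either from a direct look at the projection of $[\fr{n}_{2},\fr{n}_{2}]\subset\fr{h}\oplus\fr{n}_{3}$ onto $\fr{n}_{3}$ via the explicit matrix realization of $\fr{n}_{2}$, or by using the global identity obtained from the $\fr{sp}(n)$-Casimir acting as identity on its adjoint, once all other contributions are in hand.

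The main obstacle I anticipate is the meticulous tracking of normalization constants -- between $-B_{\fr{sp}(n)}$, its restrictions to $\fr{u}(p)$ and $\fr{sp}(n-p)$, and the matrix trace form -- together with the identification of the correct real forms of the complex conjugate pairs of submodules appearing in (\ref{diaspasi1}). The representation-theoretic inputs (Casimir eigenvalues, weights of the center) are classical; the actual labour lies in arranging the factors of $n+1$ and $p$ so that everything reconciles with the asserted expressions, and the global sum rules coming from the $\fr{sp}(n)$-Casimir provide a useful consistency check at each step.
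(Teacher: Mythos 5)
The paper offers no proof of this lemma at all: it is quoted verbatim from the reference [ArMoSa], so there is no internal argument to compare yours against, and your derivation is a genuinely independent route. Its skeleton is sound, and the constants do close. The central-element argument gives $A_{220}=\mu^{2}d_{2}$, $A_{330}=4\mu^{2}d_{3}$ with $\mu^{2}(d_{2}+4d_{3})=1$, exactly as you say, since $\fr{n}_2$ and $\fr{n}_3$ carry central weights $\pm1$ and $\pm2$ and $\ad(H_0)$ kills $\fr{u}(p)\oplus\fr{sp}(n-p)$. For the rest, the one normalization you defer is $B|_{\fr{u}(p)}(X,Y)=4(n+1)\tr_{p}(XY)$ (the factor $2$ from the symplectic embedding times $2(n+1)$); with it the Casimir eigenvalues of $\fr{u}(p)$ relative to $-B|_{\fr{u}(p)}$ are $c_{2}=p/(4(n+1))$ on $\mu_{p}\otimes\nu_{n-p}$ and $c_{3}=(p+1)/(2(n+1))$ on $S^{2}\mu_{p}$, and your sum rules $A_{220}+A_{122}=c_{2}d_{2}$, $A_{330}+A_{133}=c_{3}d_{3}$ reproduce the stated $A_{122}$ and $A_{133}$; the Killing-form ratio $B_{\fr{su}(p)}/B|_{\fr{su}(p)}=p/(2(n+1))$ gives $A_{111}=p(p^{2}-1)/(2(n+1))$, which equals $2d_{3}(2d_{1}+2-d_{3})/(d_{2}+4d_{3})$; and the global trace identity $d_{3}=2A_{330}+2A_{133}+A_{322}$ (valid because $\fr{sp}(n-p)$ acts trivially on $\fr{n}_{3}$) closes with $A_{322}=d_{2}d_{3}/(d_{2}+4d_{3})$. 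Two caveats. First, your method necessarily yields $A_{133}=2d_{3}(d_{3}-2)/(d_{2}+4d_{3})$; the denominator $d_{2}+2d_{3}$ printed in the lemma is a typo (Proposition \ref{prop2} uses $d_{2}+4d_{3}$), and a complete writeup should say so rather than silently match the printed formula. Second, as written the proposal postpones precisely the computations that carry the content -- the two Casimir eigenvalues and the trace-form conversions -- to ``classical facts''; these are routine and do reconcile, but they must actually appear for this to be a proof rather than a plan.
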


By using the above lemmas, we obtain the components of the Ricci tensor for the metrics (\ref{metric1}) and (\ref{metric2}). 
\begin{prop}\label{prop1}
The components  of  the Ricci tensor ${r}$ for the invariant metric $\langle\cdot ,\cdot\rangle $ on Stiefel manifold $G/H$ defined by  $(\ref{metric1})$ are given as follows$:$  
\begin{equation}\label{ricci1}
\begin{array}{rcl} 
r_1 &=&  \displaystyle{\frac{k_1+1}{4 (n +1)  x_1} +
\frac{k_2}{4 (n +1) }  \frac{x_1}{{x_{12}}^2}} + \frac{k_3}{4 (n +1) } \frac{x_1}{{x_{13}}^2}, 
 \\   \\
 r_2 &=&   \displaystyle{\frac{k_2+1}{4 (n +1)  x_2} +
\frac{k_1}{4 (n +1) }  \frac{x_2}{{x_{12}}^2}} + \frac{k_3}{4 (n +1) } \frac{x_2}{{x_{23}}^2}, 
\\ \\
r_{12} &=&   \displaystyle{\frac{1}{ 2 x_{12}} +\frac{k_3}{4 (n +1)}\biggl(\frac{x_{12}}{x_{13} x_{23}} - \frac{x_{13}}{x_{12} x_{23}}  - \frac{x_{23}}{x_{12} x_{13}}\biggr) } -
\displaystyle{\frac{2 k_1 +1}{8 (n +1)}  \frac{ x_1}{{x_{12}}^2}-\frac{2 k_2 +1}{8 (n +1)}  \frac{ x_2}{{x_{12}}^2} },
\\ \\
r_{13} &=&    \displaystyle{\frac{1}{  2 x_{13}} +\frac{k_2}{4 (n +1)}\biggl(\frac{x_{13}}{x_{12} x_{23}} - \frac{x_{12}}{x_{13} x_{23}} - \frac{x_{23}}{x_{12} x_{13}}\biggr)}  -
\displaystyle{\frac{2 k_1 +1}{8 (n +1)}  \frac{ x_1}{{x_{13}}^2} },
\\  \\
r_{23}  &=&    \displaystyle{\frac{1}{ 2 x_{23}} +\frac{k_1}{4 (n+1)}\biggl(\frac{x_{23}}{x_{13} x_{12}} - \frac{x_{13}}{x_{12} x_{23}} - \frac{x_{12}}{x_{23} x_{13}}\biggr)} -
\displaystyle{\frac{2 k_2 +1}{8 (n +1)}  \frac{ x_2}{{x_{23}}^2}. } 
\end{array} 
\end{equation}
\end{prop}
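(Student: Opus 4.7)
The plan is to prove the proposition by direct substitution into the Ricci formula of Lemma~\ref{ric2}, using the inventory of nonzero structure constants in (\ref{triplets}) together with the explicit values provided by Lemma~\ref{brac1}. Since the five submodules $\fr{p}_1,\fr{p}_2,\fr{p}_{12},\fr{p}_{13},\fr{p}_{23}$ are pairwise inequivalent under $\Ad(K)$, the metric (\ref{metric1}) is diagonal and Lemma~\ref{ric2} applies directly with $s=5$ and indexing set $\{1,2,(12),(13),(23)\}$. Thus the task reduces to bookkeeping: for each $k$ in this index set, I need to list every ordered pair $(j,i)$ for which $A_{jik}\neq 0$, insert the value of $A_{jik}$ from Lemma~\ref{brac1}, and then simplify using $k_1+k_2+k_3=n$.

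I will compute each $r_k$ in turn. For $r_1$ the only unordered triples containing the index $1$ are $\{1,1,1\}$, $\{1,(12),(12)\}$, $\{1,(13),(13)\}$. Because two of the three indices coincide in each case, the second sum $\sum_{j,i}\frac{x_k}{x_j x_i}A_{jik}$ picks up the contributions $\frac{A_{111}}{x_1}$, $\frac{x_1}{x_{12}^2}A_{1(12)(12)}$, $\frac{x_1}{x_{13}^2}A_{1(13)(13)}$, while the third sum $\sum_{j,i}\frac{x_j}{x_k x_i}A_{kij}$ contributes $\frac{A_{111}}{x_1}$, $\frac{A_{1(12)(12)}}{x_1}$, $\frac{A_{1(13)(13)}}{x_1}$. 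Substituting $d_1 = k_1(2k_1+1)$ and the formulas from Lemma~\ref{brac1}, the factor $(2k_1+1)$ cancels with $d_1$ in every term, producing coefficients $\frac{k_1+1}{n+1}$, $\frac{k_2}{n+1}$, $\frac{k_3}{n+1}$; the identity $k_1+1+k_2+k_3 = n+1$ then collapses the $1/x_1$ terms to $\frac{k_1+1}{4(n+1)x_1}$. The computation for $r_2$ is symmetric under $1\leftrightarrow 2$.

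For the mixed components $r_{12}$, $r_{13}$, $r_{23}$ I would proceed the same way, but now the triple $A_{(12)(13)(23)}$ with three distinct indices enters and must be counted over both orderings $(j,i)=(13,23)$ and $(23,13)$. For $r_{12}$, the ordered pairs with $A_{ji(12)}\neq 0$ are $(1,12),(12,1),(2,12),(12,2),(13,23),(23,13)$, and analogously for the third sum. Combining the two sums, the contributions $\frac{A_{1(12)(12)}}{x_1}$ and $\frac{A_{2(12)(12)}}{x_2}$ cancel, leaving exactly the six ``fiber'' and ``flag'' terms shown in (\ref{ricci1}). Using $d_{12}=4k_1k_2$ and $A_{(12)(13)(23)}=\frac{2k_1k_2k_3}{n+1}$ gives the coefficient $\frac{k_3}{4(n+1)}$, while $A_{a(12)(12)}=\frac{k_1k_2(2k_a+1)}{n+1}$ gives $\frac{2k_a+1}{8(n+1)}$. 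The formulas for $r_{13}$ and $r_{23}$ follow by the cyclic permutation of indices $(1,2,3)$ that Lemma~\ref{brackets} and Lemma~\ref{brac1} respect (with the caveat that, as $\fr{p}_3=\fr{sp}(k_3)$ is absorbed into $\fr{h}$, the ``$\frac{x_3}{x^2}$'' terms never appear).

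The main obstacle is purely clerical: correctly tracking multiplicities in the unordered-vs-ordered sums, especially for triples such as $\{1,(12),(12)\}$ where one must distinguish which of the two identical indices plays the role of the outer index $k$, and making sure that the factor of $2$ appearing in the ordered sum combines correctly with the $\frac{1}{4d_k}$ and $\frac{1}{2d_k}$ prefactors. No genuinely hard step appears; once the bookkeeping is done, the dimension $d_k$ always cancels cleanly against a matching factor in $A_{\cdot\cdot k}$, and the identity $k_1+k_2+k_3+1=n+1$ is the only nonformal simplification invoked.
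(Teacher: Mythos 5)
Your proposal is correct and follows essentially the same route the paper takes (the paper simply states that Proposition 3.4 follows "by using the above lemmas"): substitute the nonzero structure constants from (\ref{triplets}) and Lemma \ref{brac1} into the Park--Sakane formula of Lemma \ref{ric2}, track the ordered-pair multiplicities, and simplify with $d_a=k_a(2k_a+1)$, $d_{ab}=4k_ak_b$ and $k_1+k_2+k_3+1=n+1$. The key cancellations you identify (the $1/x_a$ terms in $r_{12},r_{13},r_{23}$, and the collapse of $\tfrac{1}{2x_1}$ against the third sum in $r_1$) are exactly the ones that occur, so the bookkeeping checks out.
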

To find Einstein metrics of the form (\ref{metric1}) reduces to find positive solutions of the system
\begin{equation}\label{equa1}
r_1 = r_2, \ r_2 = r_{12}, \ r_{12} = r_{13}, \ r_{13} = r_{23}.
\end{equation}

\begin{prop}\label{prop2}
The components  of  the Ricci tensor ${r}$ for the invariant metric $\langle\cdot ,\cdot\rangle $ on Stiefel manifold $G/H$ defined by  $(\ref{metric2})$ are given as follows$:$  
\begin{equation}\label{ricci2}
\begin{array}{lll} 
r_{0} &=& \displaystyle{\frac{u_0}{4u_{2}^{2}}\,\frac{d_{2}}{(d_2 + 4d_3)} + \frac{u_0}{4u_{3}^{2}}\,\frac{4d_3}{(d_2 + 4d_3)}}\\ \\
r_{1} &=& \displaystyle{\frac{1}{4d_{1}u_{1}}\,\frac{2d_3(2d_1 + 2 -d_3)}{(d_2 + 4d_3)} + \frac{u_1}{4 u_{2}^{2}}\,\frac{d_2}{(d_2 + 4d_3)} + \frac{u_1}{2d_1 u_{3}^{2}}\,\frac{d_3(d_3 - 2)}{(d_2 + 4d_3)}}\\ \\
r_{2} &=& \displaystyle{\frac{1}{2u_2} -\frac{u_3}{2 u_{2}^{2}}\,\frac{ d_3}{(d_2 + 4d_3)} -\frac{1}{2u_{2}^{2}}\left(u_{0}\,\frac{1}{(d_2 + 4d_3)} + u_{1}\,\frac{d_{1}}{(d_2 + 4d_3)}\right)} \\ \\
r_{3} &=& \displaystyle{\frac{1}{u_3}\,\left(\frac{1}{2} - \frac{1}{2}\,\frac{d_2}{(d_2 + 4d_3)}\right) + \frac{u_3}{4u_{2}^{2}}\,\frac{d_2}{(d_2 + 4d_3)} - \frac{1}{ u_{3}^{2}}\left(u_{0}\,\frac{2}{(d_2 + 4d_3)} + u_{1}\,\frac{d_3 - 2}{(d_2 + 4d_3)}\right)}\\ \\
\end{array}  
\end{equation}
\end{prop}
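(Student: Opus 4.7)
My plan is to derive (\ref{ricci2}) by direct substitution into Lemma \ref{ric2}, applied to the four-summand decomposition $\fr{m}=\fr{n}_0\oplus\fr{n}_1\oplus\fr{n}_2\oplus\fr{n}_3$ of (\ref{diaspasi1}) with dimensions $d_0=1$, $d_1=p^2-1$, $d_2=4p(n-p)$, $d_3=p(p+1)$.

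The first step is to identify the non-zero structure constants. The bracket relations (\ref{brackets2}), together with $[\fr{h},\fr{n}_k]\subset\fr{n}_k$ for $k=2,3$, $[\fr{n}_1,\fr{n}_1]\subset\fr{n}_1$, and $[\fr{n}_0,\fr{n}_1]=0$ (since $\fr{n}_1=\fr{su}(p)$ commutes with the centre $\fr{n}_0$), show that up to permutation the only non-zero triples are $A_{220},A_{330},A_{111},A_{122},A_{133},A_{322}$, exactly as listed immediately before Lemma \ref{brac2}; that lemma then supplies their numerical values.

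Next I would compute $r_0,r_1,r_2,r_3$ one by one from (\ref{eq51}), using the book-keeping rule that in the ordered sums $\sum_{j,i}$ a triple with two distinct non-$k$ indices is counted twice while one with two equal non-$k$ indices is counted once. Two dimensional identities organise the key cancellations: $A_{220}+A_{330}=1=d_0$, which makes the second sum in (\ref{eq51}) equal to $\tfrac{1}{2u_0}$ and hence cancels the leading term of $r_0$; and $A_{111}+A_{122}+A_{133}=d_1$, which produces the coefficient of $\tfrac{1}{u_1}$ appearing in the stated formula for $r_1$. For $r_2$ and $r_3$ no such total cancellation occurs, and one simply collects terms by the power of $u_k$ in the denominator and substitutes the values from Lemma \ref{brac2}.

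The only real obstacle is the bookkeeping in the third step, which is routine but prone to multiplicity mistakes; once the six non-zero triples and the two dimensional identities above are in hand, every line of (\ref{ricci2}) follows by direct substitution.
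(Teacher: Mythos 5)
Your proposal is correct and coincides with the paper's own (essentially unwritten) proof: the paper obtains Proposition \ref{prop2} precisely by substituting the six non-zero triples of Lemma \ref{brac2} into the Ricci formula of Lemma \ref{ric2} for the decomposition (\ref{diaspasi1}). Your two identities $A_{220}+A_{330}=d_0$ and $A_{111}+A_{122}+A_{133}=d_1$ correctly explain the cancellation of the $\tfrac{1}{2u_0}$ and $\tfrac{1}{2u_1}$ terms, and the remaining bookkeeping (including the cancellation of the $1/u_0$, $1/u_1$, $1/u_3$ cross-terms between the two sums in $r_2$ and $r_3$) works out exactly as you describe.
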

The metric of the form (\ref{metric2}) is Einstein if and only if the system:
\begin{equation}\label{equa2}
r_0 = r_1, \ r_1 = r_2, \ r_2 = r_3,
\end{equation}
has positive solutions.

\section{Einstein metrics on $V_{p}\bb{H}^{n}$}

In this section we solve the systems (\ref{equa1}) and (\ref{equa2}) for the various values of $k_i$, $i = 1,2,3$.  

\subsection{Einstein metrics for the inner products (\ref{metric1})}

\begin{theorem}
For $n=3, 4$ the Stiefel manifold $V_{2}\bb{H}^{n}$ admits:
\begin{itemize}
\item[(1)] Eight invariant Einstein metrics which are determined by $\Ad(\Sp(1)\times\Sp(1)\times\Sp(1))$-invariant inner products of the form (\ref{metric1}).  Four of them are new, two are Jensen's metrics and the other two are ADN metrics.
\item[(2)] Eight invariant Einstein metrics which are determined by $\Ad(\Sp(1)\times\Sp(1)\times\Sp(2))$-invariant inner products of the form (\ref{metric1}).  Four of them are new, two are Jensen's metrics and the other two are ADN metrics.
\end{itemize}
\end{theorem}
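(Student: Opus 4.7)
The plan is to reduce each case to an explicit polynomial system and solve it via Gr\"obner basis elimination. For part (1) I substitute $n=3$ with $k_1=k_2=k_3=1$ into the Ricci components (\ref{ricci1}); for part (2), $n=4$ with $(k_1,k_2,k_3)=(1,1,2)$. In both cases the system (\ref{equa1}) consists of four equations in the five unknowns $x_1, x_2, x_{12}, x_{13}, x_{23}$; since the Einstein condition is scale-invariant, I normalize by setting $x_{23}=1$, reducing to four equations in four unknowns. Clearing denominators yields a polynomial ideal $I \subset \bb{Q}[x_1, x_2, x_{12}, x_{13}]$.

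Next I would compute a lexicographic Gr\"obner basis of $I$, say with order $x_1 > x_2 > x_{12} > x_{13}$. The basis should contain a univariate elimination polynomial $f(x_{13})$ together with relations expressing each of $x_1, x_2, x_{12}$ as a rational function of $x_{13}$. Isolating the real roots of $f$ by Sturm sequences and checking positivity of the recovered values, I would verify that exactly eight positive solutions arise. The symmetry $k_1=k_2$, present in both cases, provides the involution $(x_1, x_2, x_{13}, x_{23}) \mapsto (x_2, x_1, x_{23}, x_{13})$, which I would use as a consistency check and to organize the solutions into symmetric pairs.

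Among the eight positive solutions, four should match previously known metrics. The two Jensen metrics come from the Riemannian submersion $\Sp(n)/\Sp(n-p) \to \Sp(n)/(\Sp(k_1) \times \Sp(k_2) \times \Sp(k_3))$ with base and fibre both carrying Einstein metrics; they satisfy explicit scaling relations among the $x$-variables. The two ADN metrics of \cite{ADN2} are the non-Jensen Einstein metrics in the $\Sp(n) \times (\Sp(k))^{s}$-invariant family and satisfy their own identifiable symmetry constraints (in particular, equalities among the $x_{ij}$ with $i$ or $j$ equal to $3$). Substituting these closed-form Ans\"atze into the polynomial system pinpoints four specific roots of $f$, and the remaining four positive roots then yield the new Einstein metrics asserted in the theorem. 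Novelty is confirmed by direct numerical comparison with the Jensen and ADN tuples.

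The main obstacle is the computer-algebra step: the elimination polynomial $f$ typically has high degree and large integer coefficients, so certifying the exact number of positive real roots demands a rigorous symbolic argument such as Sturm's theorem on the interval $(0,\infty)$, rather than numerical approximation alone. A secondary subtlety is ensuring that all eight solutions are pairwise non-homothetic; since we have already fixed $x_{23}=1$, distinct tuples yield distinct homothety classes, but one must also verify that no two of them are identified under the outer symmetry $k_1 \leftrightarrow k_2$, which would merely permute the coordinates and not reduce the count.
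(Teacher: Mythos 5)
Your proposal follows essentially the same route as the paper: substitute the given $k_i$, normalize $x_{23}=1$, clear denominators, compute a lexicographic Gr\"obner basis with a univariate eliminant in $x_{13}$, and separate the four new solutions from the Jensen and ADN metrics. One small correction to your counting scheme: the eliminant factors as $(x_{13}-1)h(x_{13})$ and the single root $x_{13}=1$ carries \emph{all four} known metrics (over that root the fiber is not a point, and indeed the paper's basis expresses $x_1,x_2$ as polynomials in both $x_{12}$ and $x_{13}$, not in $x_{13}$ alone), so one cannot simply equate positive roots of the eliminant with Einstein metrics; the paper handles this by substituting $x_{13}=1$ back into the system and solving separately, and your Sturm-sequence count must do likewise.
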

\begin{proof}
For $(1)$ we see that from Proposition \ref{prop1} the Ricci components of the metric corresponding to $\Ad(\Sp(1)\times\Sp(1)\times\Sp(1))$-invariant inner products of the form (\ref{metric1}) are given as follows:
\begin{eqnarray*}
&& r_1 = \frac{{x_1}}{16 {x_{12}}^2}+\frac{{x_1}}{16 {x_{13}}^2}+\frac{1}{8{x_1}}, \quad 
    r_2 = \frac{{x_2}}{16 {x_{12}}^2}+\frac{{x_2}}{16 {x_{23}}^2}+\frac{1}{8{x_2}}
   \\ \\
&& r_{12} = -\frac{3 {x_1}}{32 {x_{12}}^2}-\frac{3 {x_2}}{32 {x_{12}}^2}+\frac{1}{16}\left(\frac{{x_{12}}}{{x_{13}} {x_{23}}}-\frac{{x_{23}}}{{x_{12}}
   {x_{13}}}-\frac{{x_{13}}}{{x_{12}} {x_{23}}}\right)+\frac{1}{2 {x_{12}}},
   \end{eqnarray*}
   \begin{eqnarray*}
&& r_{13} = -\frac{3 {x_1}}{32 {x_{13}}^2}+\frac{1}{16}\left(-\frac{{x_{12}}}{{x_{13}}
   {x_{23}}}-\frac{{x_{23}}}{{x_{12}} {x_{13}}}+\frac{{x_{13}}}{{x_{12}} {x_{23}}}\right)+\frac{1}{2 {x_{13}}},\\ \\
&& r_{23} = \frac{1}{16}\left(-\frac{{x_{12}}}{{x_{13}} {x_{23}}}+\frac{{x_{23}}}{{x_{12}}
   {x_{13}}}-\frac{{x_{13}}}{{x_{12}} {x_{23}}}\right)-\frac{3 {x_2}}{32 {x_{23}}^2}+\frac{1}{2 {x_{23}}}.
\end{eqnarray*}
We consider the system of equations (\ref{equa1}) for $n=3$.  
Then the metric corresponding to $\Ad(\Sp(1)\times\Sp(1)\times\Sp(1))$-invariant inner products of the form (\ref{metric1}) is Einstein if the system (\ref{equa1}) has positive solutions.  We normalize our equations by putting $x_{23} = 1$.  Then we obtain the system of equations:
\begin{equation}\label{eksi1}
\begin{array}{lll}
f_1 &=& {x_1}^2 {x_{12}}^2 {x_2}+{x_1}^2 {x_{13}}^2 {x_2}-{x_1} {x_{12}}^2 {x_{13}}^2{x_2}^2-2 {x_1} {x_{12}}^2 {x_{13}}^2-{x_1} {x_{13}}^2 {x_2}^2+2 {x_{12}}^2{x_{13}}^2 {x_2} = 0\\
f_2 &=& 3 {x_1} {x_{13}} {x_2}-2 {x_{12}}^3 {x_2}+2 {x_{12}}^2 {x_{13}} {x_2}^2+4{x_{12}}^2 {x_{13}} +2 {x_{12}} {x_{13}}^2 {x_2}-16 {x_{12}} {x_{13}} {x_2}\\
&&+2{x_{12}} {x_2}+5 {x_{13}} {x_2}^2 = 0\\
f_3 &=& 3 {x_1} {x_{12}}^2-3 {x_1} {x_{13}}^2+4 {x_{12}}^3 {x_{13}}-16 {x_{12}}^2 {x_{13}}-4
   {x_{12}} {x_{13}}^3+16 {x_{12}} {x_{13}}^2-3 {x_{13}}^2 {x_2} = 0\\
f_4 &=& -3 {x_1} {x_{12}}+3 {x_{12}} {x_{13}}^2 {x_2}-16 {x_{12}} {x_{13}}^2+16 {x_{12}}
   {x_{13}}+4 {x_{13}}^3-4 {x_{13}} = 0
\end{array}
\end{equation}
We consider a polynomial ring $R = \bb{Q}[x_1, x_2, x_{12}, x_{13}]$ and an ideal $I$ generated by $\{f_1, f_2, f_3, f_4,$ $z\,x_1\,x_2\,x_{12}\,x_{13}-1\}$ to find non zero solutions of equations (\ref{eksi1}).  We take a lexicographic
order $>$ with $z > x_2 > x_1 > x_{12} > x_{13}$ for a monomial ordering on $R$.  Then, by the aid of computer, we see that a Gr\"obner basis for the ideal $I$ contains the polynomial
$$
(x_{13} - 1)h(x_{13}),
$$
where $h(x_{13})$ is a polynomial of $x_{13}$ given by
{\small\begin{eqnarray*}
&& h(x_{13}) = 26264641347161101886463 {x_{13}}^{30}-508287291683094283147326
   {x_{13}}^{29}\\
&&+4744919846389271826285855 {{x_{13}}}^{28} -28464304403498295853317720 {x_{13}}^{27}\\
&&+123660202199445490641611164 {x_{13}}^{26}-415707976104450072060636864 {x_{13}}^{25}\\
&&+1125839862148616037654823494 {x_{13}}^{24}-2515459993664189079183771508 {x_{13}}^{23}\\ &&+4689788438841035164098977324 {x_{13}}^{22}-7301053768516762755075524460 {x_{13}}^{21}\\
&& +9384221512521610297473108154 {x_{13}}^{20} -9649002945579106949449677656 {x_{13}}^{19}\\&&+7301130495287173304405589627 {x_{13}}^{18}-2928458036429380583757463446 {x_{13}}^{17}\\
&&-1384378004627046466599664225 {x_{13}}^{16} +3187252032549620236743974472 {x_{13}}^{15}\\
&&-1384378004627046466599664225 {x_{13}}^{14}-2928458036429380583757463446 {x_{13}}^{13}\\
&&+7301130495287173304405589627 {x_{13}}^{12}-9649002945579106949449677656 {x_{13}}^{11}\\
&&+9384221512521610297473108154 {x_{13}}^{10}-7301053768516762755075524460 {x_{13}}^9\\
&&+4689788438841035164098977324 {x_{13}}^8-2515459993664189079183771508 {x_{13}}^7\\
&&+1125839862148616037654823494 {x_{13}}^6-415707976104450072060636864 {x_{13}}^5\\
&&+123660202199445490641611164 {x_{13}}^4-28464304403498295853317720 {x_{13}}^3\\
&&+4744919846389271826285855 {x_{13}}^2-508287291683094283147326 x_{13}\\
&&+26264641347161101886463.
\end{eqnarray*} }
By solving the equation $h(x_{13}) = 0$ numerically, we obtain four positive solutions which are given approximately as $0.568723, \, 0.595776, \, 1.67848, \, 1.75833$.  We also see that the Gr\"obner basis for the ideal $I$ contains the polynomials
$$
x_{12} - \al_{12}(x_{13}), \quad x_{1} - \al_{1}(x_{12}, x_{13}), \quad x_{2} - \al_{2}(x_{12}, x_{13})
$$
where $\al_{12},\, \al_{1},\, \al_{2}$ are polynomials of $x_{13}$ and $x_{12}$ with rational coefficients. By substituting the solution of $x_{13}$ into $\al_{12},\, \al_{1}$ and $\al_{2}$ we obtain four positive solutions of the system of equations $\{f_1=0,\, f_2 = 0,\, f_3 = 0, \, f_4 = 0\}$ approximately as $(x_1,\, x_2,\, x_{12},\, x_{13},\, x_{23}) \approx$ 
\begin{eqnarray*} & & 
(0.276281,\, 0.251266,\, 0.460887,\, 0.568722,\, 1),\quad
 (1.112249,\, 0.417937,\, 1.598741,\, 0.595776,\, 1),\\ & & 
 (0.701500,\, 1.866891,\, 2.683459,\, 1.678482,\, 1),\quad 
 (0.441809,\, 0.485793,\, 0.810389,\, 1.758325,\, 1). \end{eqnarray*}
Now we consider the case where $x_{13} = 1$.  By substituting $x_{13} = 1$ into (\ref{eksi1}) and solving again numerically, we obtain solutions approximately as $(x_1,\, x_2,\, x_{12},\, x_{13},\, x_{23}) \approx$ 
\begin{eqnarray}\label{sol1}
& & (0.472797,\, 0.472797,\, 0.472797,\, 1,\, 1),\quad 
(1.812916,\, 1.812916,\, 1.812916,\, 1,\, 1), \nonumber\\
& & (0.344889,\, 0.344889,\, 0.80019,\, 1,\, 1),\quad  (0.483972,\, 0.483972,\, 2.585187,\, 1,\, 1).
\end{eqnarray}
The first two of the solutions (\ref{sol1}) are Jensen's metrics and the other two are ADN metrics.

To prove part $(2)$ of the theorem, we can work analogously.   In this case the system (\ref{equa1}) for $k_1 = k_2 = 1$, $k_3 = 2$ and $x_{23} = 1$ is the following:
\begin{equation}\label{case2}
\begin{array}{lll}
g_1 = 2 {x_1}^2 {x_{12}}^2 {x_2}+{x_1}^2 {x_{13}}^2 {x_2}-2 {x_1} {x_{12}}^2 {x_{13}}^2 {x_2}^2-2 {x_1} {x_{12}}^2 {x_{13}}^2-{x_1} {x_{13}}^2 {x_2}^2+2{x_{12}}^2 {x_{13}}^2 {x_2} = 0\\
g_2 = 3 {x_1} {x_{13}} {x_2}-4 {x_{12}}^3 {x_2}+4 {x_{12}}^2 {x_{13}} {x_2}^2+4{x_{12}}^2 {x_{13}}+4 {x_{12}} {x_{13}}^2 {x_2}-20 {x_{12}} {x_{13}} {x_2}\\
\quad \quad +4{x_{12}} {x_2}+5 {x_{13}} {x_2}^2 = 0\\
g_3 = 3{x_1} {x_{12}}^2-3 {x_1} {x_{13}}^2+6 {x_{12}}^3 {x_{13}}-20 {x_{12}}^2 {x_{13}}-6{x_{12}} {x_{13}}^3+20 {x_{12}} {x_{13}}^2-2 {x_{12}} {x_{13}}-3 {x_{13}}^2 {x_2} = 0\\
g_4 = -3 {x_1} {x_{12}}+3{x_{12}} {x_{13}}^2 {x_2}-20{x_{12}} {x_{13}}^2+20{x_{12}}{x_{13}}+4 {x_{13}}^3-4 {x_{13}} = 0.
\end{array}
\end{equation}
A Gr\"obner basis for the ideal $I$ of the polynomial ring $R = \bb{Q}[x_1, x_2, x_{12}, x_{13}]$ generated by $\{g_1, g_2, g_3, g_4, z\,x_1\,x_2\,x_{12}\,x_{13}-1 \}$ and equipped with the lexicographic order $>$ with $z > x_2 > x_1 > x_{12} > x_{13}$ for a monomial ordering on $R$, contains the polynomial $(x_{13} - 1)h(x_{13})$.  The degree of $h(x_{13})$ is $30$.  By performing analogous computations as in the proof of part (1) we obtain the following Einstein metrics which correspond to the $\Ad(\Sp(1)\times\Sp(1)\times\Sp(2))$-invariant inner products of the form (\ref{metric1}):  The new metrics $(x_1, x_{2}, x_{12}, x_{13}, x_{23}) \approx $
\begin{eqnarray*}
 & & (0.227002,\, 0.207491,\, 0.362198,\, 0.643984,\, 1),  \quad
  (1.293692,\, 0.292641,\, 1.707728,\, 0.683996,\, 1)\\
 & & (0.427841,\, 1.891372,\, 2.496690,\, 1.461995,\, 1), \quad 
 (0.322198,\, 0.352496,\, 0.562433,\, 1.552832,\, 1). 
\end{eqnarray*}
the Jensen's metrics: $(x_1, x_{2}, x_{12}, x_{13}, x_{23}) \approx $ 
\begin{eqnarray*}
 & & (0.357518,\, 0.357518,\, 0.357518,\, 1,\, 1), \quad 
 (1.864703,\, 1.864703,\, 1.864703,\, 1,\, 1)
\end{eqnarray*}
and the ADN metrics: $(x_1, x_{2}, x_{12}, x_{13}, x_{23}) \approx $ 
\begin{eqnarray*}
&  & (0.256403,\, 0.256403,\, 0.607404,\, 1,\, 1), \quad
 (0.309365,\, 0.309365,\, 2.398604,\, 1,\, 1). 
\end{eqnarray*}
\end{proof}
By working in a similar manner as in the above proof, we conjecture the existence of 
 new Einstein metrics on 
$V_{n-1}\bb{H}^{n}$ ($n\ge 3$) and $V_{n-2}\bb{H}^{n}$ ($n\ge 5$) 
 as shown in Table 1.
{\small
\begin{table}[htb]
\begin{tabular}{c|c|c}
\thickhline
 $V_{k_1+k_2}\bb{H}^{k_1+k_2+k_3}$    & {\bf Jensen's metrics} & {\bf New metrics}   \\
 \thickhline
 $k_1 = n-2, k_2 = k_3 = 1$ &   &\\
 \hline
$3 \leq n \leq 7$ & $2$ & $6$\\
$8 \leq n \leq 29$ & $2$ & $8$\\
$ 30 \leq n $  & $2$  & $10$\\
\thickhline
 $k_1 = n-3, k_2 =1,  k_3 = 2$ & &\\
 \hline
 $5 \leq n \leq 9$  & $2$  & $6$  \\
 $n = 10$  & $2$  & $8$  \\
 $11 \leq n \leq 27 $ & $2$  & $6$  \\
 $28 \leq n \leq 40$  & $2$  & $8$  \\
 $ 41 \leq n$   & $2$   & $10$\\
\thickhline             
 \end{tabular}
 \smallskip
 \caption{ {\small Conjectured number of Einstein metrics corresponding to $\Ad(\Sp(k_1)\times\Sp(k_2)\times\Sp(k_3))$-invariant inner products of the form (\ref{metric1})}}
\end{table}
}


\subsection{Einstein metrics for inner products (\ref{metric2})}
For the invariant metrics on $ \Sp(n)/\Sp(n-p)$ determined by the $\Ad(\U(p)\times \Sp(n-p))$-invariant scalar products (\ref{metric2}), if either $u_0 = u_1$, $u_0 = u_3$ or $u_1 = u_3$,  we  see that invariant Einstein metrics on $ \Sp(n)/\Sp(n-p)$ of the form (\ref{metric2}) are Jensen's metrics. 
\begin{theorem}
For $ \displaystyle 2  \leq p \leq \frac{3}{4} n$, there exist two  invariant Einstein metrics on $ \Sp(n)/\Sp(n-p)$  of the form {\em (\ref{metric2})} which are different from Jensen's metrics. 
\end{theorem}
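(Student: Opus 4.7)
The plan is to adapt the Gr\"obner basis method used in Theorem 4.1, but to carry $n$ and $p$ symbolically rather than specializing. By scaling invariance, first normalize $u_2 = 1$. Substituting $d_1 = p^2 - 1$, $d_2 = 4p(n-p)$, $d_3 = p(p+1)$ into Proposition \ref{prop2} and clearing denominators (all $u_i$ being positive), the system (\ref{equa2}) becomes three polynomial equations $f_0 = f_1 = f_2 = 0$ in $\bb{Q}(n,p)[u_0, u_1, u_3]$.

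The key structural input is the remark preceding the theorem: every Einstein solution satisfying one of $u_0 = u_1$, $u_0 = u_3$, or $u_1 = u_3$ is automatically a Jensen metric. A direct substitution $u_0 = u_1 = u_3 = u$ into $r_0 = r_2$ gives the Jensen quadratic
$$
(2n+1)\,u^2 - 4(n+1)\,u + 2(p+1) = 0,
$$
whose discriminant $16(n+1)^2 - 8(2n+1)(p+1)$ is positive for all $p \le n$, recovering the two Jensen solutions. Any \emph{new} Einstein metric must have $u_0, u_1, u_3$ pairwise distinct. I would therefore compute a lexicographic Gr\"obner basis of the ideal $I = \langle f_0, f_1, f_2, z\, u_0 u_1 u_3 - 1 \rangle$ with $z > u_0 > u_1 > u_3$, extract the univariate elimination element $P(u_3) \in \bb{Q}(n,p)[u_3]$, and divide out the Jensen quadratic factor to obtain a residual polynomial $Q(u_3)$ whose positive real roots, together with the values of $u_0, u_1$ recovered from the remaining basis members (which express them as rational functions of $u_3$), give the sought new Einstein metrics.

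I expect $Q$ to be quadratic in $u_3$ with coefficients polynomial in $n, p$. The range $2 \le p \le \tfrac{3}{4}n$ should then coincide precisely with the simultaneous positivity of (a) the discriminant of $Q$, (b) the sum and product of its roots, and (c) the values of $u_0, u_1$ obtained by back-substitution. The lower bound $p \ge 2$ is natural because $p = 1$ yields $d_3 - 2 = 0$, degenerating the $r_3$ formula and reducing to the sphere case $V_{1}\bb{H}^n = \bb{S}^{4n-1}$. The upper bound $p \le \tfrac{3}{4}n$ is expected to correspond exactly to the vanishing of the discriminant of $Q$, explaining its geometric origin.

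The main obstacle is executing the Gr\"obner basis symbolically in $(n,p)$ and cleanly isolating the Jensen factor inside $P$. A preliminary change of variables that makes the Jensen diagonal manifest, for instance $v = u_0 - u_3$ and $w = u_1 - u_3$ so that the Jensen locus becomes $\{v = w = 0\}$, may be essential to control the size of intermediate expressions and to exhibit the factorization $P = P_{\mathrm{Jen}} \cdot Q$ transparently. Once $Q$ is isolated, verifying (a)--(c) reduces to elementary polynomial inequality analysis in $n$ and $p$, and distinctness of the new metrics from the Jensen (and ADN) ones follows automatically from $u_0 \neq u_1$, $u_0 \neq u_3$, $u_1 \neq u_3$ on the residual factor.
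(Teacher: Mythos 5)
Your overall strategy---eliminate via a lexicographic Gr\"obner basis, factor out the Jensen locus, and analyze the residual univariate polynomial---is the same as the paper's, but several of your concrete expectations are wrong in ways that would derail the execution. The paper normalizes $u_3=1$ (rather than $u_2=1$) and finds that the elimination polynomial factors as $(u_1-1)\,U_1(u_1)$, where the Jensen solutions sit on the linear factor $u_1-1$ (i.e.\ $u_1=u_3$) and the residual $U_1$ has degree \emph{eight}, not two. So there is no ``Jensen quadratic'' to divide out of the elimination polynomial, and the discriminant analysis you plan for a quadratic $Q$ is not available. Instead, the existence of two positive roots of $U_1$ is obtained by an intermediate value argument: one checks that $U_1(0)>0$ and $U_1(1)>0$ by exhibiting them as manifestly positive expressions in $n-p$ and $p$, and that $U_1(1/5)<0$; this last sign is where the hypothesis $p\le\tfrac34 n$ enters, via an expansion of the relevant polynomial in powers of $(n-\tfrac{4p}{3})$ with all coefficients positive. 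Your guess that $p\le\tfrac34 n$ ``corresponds exactly to the vanishing of the discriminant'' is therefore unfounded: the bound is simply where this particular sufficient sign condition can be verified, not a sharp threshold.

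The second genuine gap is the positivity of the remaining coordinates. You assert that back-substitution and ``elementary polynomial inequality analysis'' will show $u_0,u_2>0$, but the Gr\"obner basis only expresses $u_2$ and $u_0$ as \emph{real} solutions of linear relations $a_i(n,p)\,u+W_i(u_1,n,p)=0$; their signs are not at all obvious. The paper handles this by computing two further elimination polynomials $U_2(u_2)$ and $U_0(u_0)$ (under different term orders) and verifying that their coefficients strictly alternate in sign for $2\le p\le\tfrac34 n$, so that any real root is automatically positive. This step occupies the bulk of the proof and cannot be waved away. Your remark on the lower bound is also slightly off target: for $p=1$ the real obstruction is $d_1=p^2-1=0$, i.e.\ the summand $\fr{h}_1$ disappears and the four-parameter ansatz (\ref{metric2}) itself degenerates, not merely the term $d_3-2$ in $r_3$.
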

\begin{proof}
From Proposition \ref{prop2} we see that the components of the Ricci tensor for the metric of the form (\ref{metric2}) are given by
\begin{eqnarray*}
&&r_{0} = \displaystyle{\frac{(n-p)}{4(n+1)}\frac{u_{0}}{{u_{2}}^{2}}  +\frac{p+1}{4(n+1)} \frac{u_{0}}{{u_{3}}^{2}}}, \quad \quad
r_{1} = \displaystyle{\frac{p}{8 (n+1)}\frac{1}{u_{1}} + \frac{(n-p)}{(n+1)}\frac{u_{1}}{4{u_{2}}^{2}}+ \frac{(p+2)}{  8(n+1)}\frac{ u_{1}}{{u_{3}}^{2}} },\\ \\ 
&&r_{2} = \displaystyle{\frac{1}{2u_{2}} - \frac{p+1}{8(n+1)} \frac{u_3}{u_{2}^{2}}\, - \frac{1}{8 p (n+1) {u_{2}}^{2}}\left(u_{0}  + (p^2-1) u_{1} \right)},\\ \\
&&r_{3} = \displaystyle{\frac{1}{u_{3}}\, \frac{p+1}{2(n+1)}+\frac{(n-p)}{4(n+1)} \frac{u_{3}}{u_{2}^{2}}\, - \frac{1}{2 p (n+1){u_{3}}^{2}}\,\left(u_{0} + u_{1}\,\frac{(p-1)(p+2)}{2}\right).}\\
\end{eqnarray*}
We put $u_{3} = 1$ so the system $\{r_0 = r_1, \ r_1 = r_2, \ r_2 = r_3\}$ is given by
\begin{equation}\label{syst2}
\begin{array}{lll}
f_{1}&=&  \ \ 2 {u_0} {u_1} (n-p)-2 {u_1}^2 (n-p)+2 (p+1) {u_0} {u_1}{u_2}^2-(p+2) {u_1}^2 {u_2}^2-p {u_2}^2 = 0\\
f_{2}&=& \ \ {u_1}^2 \left((2 n -p) p-1\right)-4 (n+1) p  {u_1} {u_2}+p^2 {u_2}^2 +p (p+2) {u_1}^2
   {u_2}^2 \\ & & + p (p+1) {u_1}+{u_0}{u_1} = 0\\
f_{3}&=& \ \ 4 (n+1) p {u_2}+p (-2 n+p-1)+ 2 (p-1)(p+2){u_1}{u_2}^2 -(p-1)(p+1){u_1}\\&  & - 4 p(p+1){u_2}^2+4{u_0}{u_2}^2-{u_0}= 0.
\end{array}  
\end{equation}

We consider a polynomial ring $R = \bb{Q}[z, u_0, u_1, u_2]$ and an ideal $I$ generated by $\{f_1,\, f_2,\, f_3,\, z\,u_{0}\,u_{1}\,u_{2} - 1\}$ to find non zero solutions of the above equations. We take a lexicographic order $>$ with $z > u_{0} > u_{2} > u_{1}$ for a monomial ordering on $R$. Then, by the aid of computer, we see that a Gr\"obner basis for the ideal $I$ contains the polynomial $(u_{1} -1)U_{1}(u_{1}),$ where $U_{1}$ is a polynomial of $u_{1}$ given by:
{\small \begin{eqnarray*}
&&U_{1}(u_1) = (4 n-p+1)^4 (p-1)^2 (p+2)^2 {u_1}^8\\ & & -2 (4 n-p+1)^3 (p-1) (p+2) \left(-10 p^3+2 n p^2-25 p^2-20 p+8 n\right) {u_1}^7\\ & &+(4 n-p+1)^2
   (140 p^6-20 n p^5+726 p^5-396 n^2 p^4-828
   n p^4+1057 p^4+352 n^3 p^3-272 n^2 p^3\\ & &-1860 n
   p^3+178 p^3+896 n^3 p^2+1248 n^2 p^2-840 n
   p^2-596 p^2+640 n^3 p+832 n^2 p-560 n p\\ & &-816
   p+512 n^3+1088 n^2+608 n-64)
   {u_1}^6 \\ & &-2 (4 n-p+1)(-182 p^7-166 n
   p^6-1407 p^6+2020 n^2 p^5+3174 n p^5-2570
   p^5-2272 n^3 p^4\\ & &+2412 n^2 p^4+10762 n p^4-504
   p^4+576 n^4 p^3-5680 n^3 p^3-5760 n^2 p^3+9190
   n p^3+1377 p^3\\ & &+1024 n^4 p^2-4928 n^3 p^2-11072
   n^2 p^2-500 n p^2-202 p^2+512 n^4 p-384 n^3
   p-2752 n^2 p\\ & &-688 n p-480 p-512 n^4-896 n^3-448
   n^2+128 n-32) {u_1}^5\\ & &+(46 p^8+2524 n p^7+2706 p^7-10740 n^2 p^6-5140 n
   p^6+7173 p^6+6624 n^3 p^5-41736 n^2 p^5\\ & &-63232 n
   p^5-5706 p^5+31936 n^4 p^4+160544 n^3
   p^4+138044 n^2 p^4-30916 n p^4-15743 p^4\\ & &-55296
   n^5 p^3-147968 n^4 p^3+60992 n^3 p^3+300208 n^2
   p^3+126100 n p^3+14418 p^3+24576 n^6 p^2\\ & &-27648 n^5 p^2-365312 n^4 p^2-527104 n^3 p^2-217536
   n^2 p^2-32888 n p^2-1358 p^2+57344 n^6 p\\ & &+202752
   n^5 p+196608 n^4 p+19072 n^3 p-30464 n^2
   p-12512 n p-1784 p+16384 n^6+65536 n^5 
     \end{eqnarray*} }
 {\small \begin{eqnarray*} 
   & &+90112
   n^4+52992 n^3+17344 n^2+3264 n+248)
   {u_1}^4 \\ 
 & &-2 (-330 p^8+274 n p^7-2129
   p^7+3408 n^2 p^6+8656 n p^6-2144 p^6-8960 n^3
   p^5-10088 n^2 p^5\\ & &+13352 n p^5+2146 p^5+11264
   n^4 p^4+10304 n^3 p^4-10032 n^2 p^4+8004 n
   p^4+5788 p^4
  \\ & &-9728 n^5 p^3-17536 n^4 p^3-20800
   n^3 p^3-42824 n^2 p^3-29634 n p^3-3701 p^3+4096
   n^6 p^2\\ & &+7168 n^5 p^2+17664 n^4 p^2+52928 n^3
   p^2+52752 n^2 p^2+13308 n p^2+1114 p^2+4096 n^6
   p\\ & &-4096 n^5 p-48128 n^4 p-68800 n^3 p -34208 n^2
   p-8856 n p-1000 p+8192 n^6+32768 n^5\\ & &+47104
   n^4+31616 n^3+12096 n^2+2576 n+256)
   {u_1}^3
   \\ & &+ (444 p^8-1564 n p^7+1226
   p^7+1452 n^2 p^6-5132 n p^6-121 p^6+1856 n^3
   p^5+11032 n^2 p^5\\ & &+1416 n p^5-1854 p^5-4672 n^4
   p^4-12192 n^3 p^4+1788 n^2 p^4+10132 n p^4+439
   p^4+2560 n^5 p^3\\ & &+2560 n^4 p^3-15584 n^3
   p^3-23664 n^2 p^3-3276 n p^3+646 p^3+3072 n^5
   p^2+14336 n^4 p^2\\ & &+14080 n^3 p^2-6048 n^2
   p^2-8056 n p^2-1020 p^2+5120 n^4 p+14336 n^3
   p+11072 n^2 p\\ & &+1200 n p-80 p+1792 n^3+3584
   n^2+1696 n+320 ) {u_1}^2\\ & &+2 (3
   p^2-4 n p-p-2) (18 p^6-70 n p^5+5
   p^5+124 n^2 p^4+34 n p^4-10 p^4-128 n^3 p^3 \\ & &-148
   n^2 p^3 -50 n p^3 -62 p^3+64 n^4 p^2+144 n^3 p^2+104 n^2 p^2
   +106 n p^2+9 p^2+32 n^3 p\\ & &+64 n^2 p+88 n p+56 p+16 ) {u_1}+ \left(4 n p-3 p^2+p+2\right)^2 \left(2 n p-p^2+p+1\right)^2.
\end{eqnarray*} }
If $u_1 \neq 1$ then $U_{1}(u_1) = 0$.  We will prove that the equation $U_{1}(u_1) = 0$ has at least two positive roots.  Observe that
$
U_{1}(0) = \left(4 n p-3 p^2+p+2\right)^2 \left(2 n p-p^2+p+1\right)^2
$
is positive for all $p \leq n$ and  
{\small \begin{eqnarray*}
& & U_{1}(1) = 64 \left(16 n^3 p-16 n^2 p^2+32 n^2 p+4 n^2-40 n
   p^2+4 n p+4 p^4+12 p^3-7 p^2\right)\times \\ 
   & &\left(16 n^3
   p+48 n^3-16 n^2 p^2-16 n^2 p+112 n^2-16 n p^2-56 n
   p+72 n+4 p^4+12 p^3+5 p^2-30 p+9\right) \\
   & & = \left( (32 p^2+32 p+4 ) (n-p)^2+ (16 p^3+24
   p^2+12 p) (n-p)+16 p (n-p)^3+4 p^4+4 p^3+p^2 \right)\times \\
    & & \left ((16 p+48) (n-p)^3+(32 p^2+128 p+112) (n-p)^2+ (16 p^3+96
   p^2+168 p+72) (n-p) \right.\\ & & \left.+4
   p^4+28 p^3+61 p^2+42 p+9\right)
\end{eqnarray*} }
is positive for all $p \leq n$. 
We also  see that 
$U_{1}(1/5) = \displaystyle -\frac{64}{390625} u(n, p), $
where 
{\small \begin{eqnarray*}
& &
u(n, p)=  160000 n^6 p^2-160000 n^6 p+640000
   n^6-403200 n^5 p^3+230400 n^5 p^2 \\
    & & -2376000 n^5
   p+2548800 n^5-214784 n^4 p^4+582912 n^4
   p^3+2333952 n^4 p^2-7877504 n^4 p \\
    & & +3695424
   n^4+1771264 n^3 p^5-927296 n^3 p^4+827840 n^3
   p^3+8098416 n^3 p^2-10284688 n^3 p \\
    & & +2114464
   n^3-2423104 n^2 p^6-388160 n^2 p^5-2433056 n^2
   p^4-1950896 n^2 p^3+9653884 n^2 p^2 \\
    & & -5443832 n^2
   p+135164 n^2+1436864 n p^7+1036416 n p^6+1028576 n
   p^5-1683360 n p^4 \\
    & & -3395380 n p^3+4423856 n
   p^2-652556 n p-194416 n-327184 p^8-375232
   p^7-23080 p^6 \\
    & & +859256 p^5+313895 p^4-1100766
   p^3+477967 p^2+177080 p-1936. 
\end{eqnarray*} }
We claim that, for $\displaystyle  p \leq \frac{3}{4} n$, $U_{1}(1/5)$
is negative.  Indeed, we expand  $u(n, p)$  at $\displaystyle n= \frac{4 p}{3}$ as follows:
{\small \begin{eqnarray*}
& &
u(n, p)=\left(160000 p^2-160000 p+640000\right)
   \left(n-\frac{4 p}{3}\right)^6\\
    & & +\left(876800
   p^3-1049600 p^2+2744000 p+2548800\right)
   \left(n-\frac{4 p}{3}\right)^5\\
    & & +\frac{64}{3}
   \left(63932 p^4-100676 p^3+166904 p^2+427242
   p+173223\right) \left(n-\frac{4
   p}{3}\right)^4
\end{eqnarray*}  }
{\small \begin{eqnarray*}   
    & & +\frac{16}{27} \left(1759952
   p^5-2206604 p^4+2322548 p^3+19232541 p^2+15903405
   p+3568158\right) \left(n-\frac{4
   p}{3}\right)^3\\
    & & +\frac{4}{27} (2691728
   p^6-23408 p^5-1386904 p^4+46116396 p^3 +53547669 p^2+20344662 p+912357 ) \left(n-\frac{4 p}{3}\right)^2\\
    & & +\frac{4}{81} (1202864
   p^7+3605920 p^6-5670856 p^5+38327736 p^4+51328395
   p^3\\
    & & +23978268 p^2-5915403 p-3936924 )
   \left(n-\frac{4 p}{3}\right)\\
    & & +\frac{1}{729}
   (383344 p^8+23887424 p^7-39989096
   p^6+136017720 p^5+182269791 p^4\\
    & & +96117138
   p^3-110673945 p^2-59881032 p-1411344 ). 
\end{eqnarray*} }
Then we see that the coefficients are polynomials of $p$ and are positive for $p \geq 1$.  
 Hence we see that  the equation $U_{1}(u_{1}) =0$  has at least two positive solutions $u_{1} = \al_{1}, \al_{2}$ with $0< \al_{1} < 1/5$ and  $1/5 < \al_{2} < 1$.  

Now, we consider a Gr\"obner basis (take a lexicographic order $>$ with $z > u_{0} > u_{2} > u_{1}$) for the ideal $J$ generated by the polynomials
$
\{ f_1, \, f_2, \, f_3, $  $  \, z\, u_{0}\, u_{1}\, u_{2} \,( u_{1} - 1)  -1\}.
$ 
This basis contains the polynomial $U_{1}(u_{1})$ and the polynomials 
$$
a_1(n, p)\, u_2 + W_{1}(u_1, n, p), \quad a_2(n, p)\, u_0 + W_{2}(u_1, n, p)
$$
where $a_i(n, p)$ ($i = 1, 2$) are polynomials of $n$ and $p$,  and $ W_{i}(u_1, n, p)$  $(i = 1, 2)$ are polynomials of $u_1$, $n$ and $p$.    For $1 \leq p < n$ we can see that the polynomials $a_{i}(n, p)$ ($i = 1,2$) are positive.  Thus, for the positive values $u_1 = \al_1, \al_2$  we obtain the real values $u_{2} = \gamma_1, \gamma_2$ and $u_0 = \beta_1, \beta_2$ as solutions of the system (\ref{syst2}).  We prove next that these solutions are positive.  We consider the ideal $J$ and we take the lexicographic order $>$ with $z > u_0 > u_1 > u_{2}$  for a monomial ordering on $R$.  Then  we see that the Gr\"obner basis  for the ideal $J$ contains the polynomial $U_{2}(u_{2})$:
{\small \begin{eqnarray*}
&&U_{2}(u_{2}) = 2 (p+1) (p+2) \left(5 p^3+8 p^2+p+2\right) {u_2}^8
-4 (n+1) (p+2) (3 p+1) \left(p^2+p+2\right) {u_2}^7 
\\ & &+  \left(2 n \left(17 p^4+58 p^3+61 p^2+20 p+4\right)-30
   p^5-101 p^4-88 p^3+11 p^2+28 p+4\right){u_2}^6 
   \\ & & -8 (n+1)(n-p) \left(4 p^3+9 p^2+9 p+6\right) {u_2}^5
   +2 (p+1) (n-p) \left(n (21 p+26) p-16 p^3-17 p^2+ \right. 
   \\ & & \left. 16 p+12\right) {u_2}^4
     -4 (n+1)  (7 p^2+8 p+4 ) (n-p)^2  
   (34 n p^4+116 n p^3+122 n p^2+40 n p +8 n-30 p^5 \\ & &  -101 p^4  
  -88 p^3+11 p^2+28 p+4 ) {u_2}^3  +(n-p)^2 \left(22 n p^2+24 n p-14 p^3-9 p^2+20 p+14\right){u_2}^2
    \\ & & -8 (n+1) p (n-p)^3  {u_2}+2 (n-p)^3 \left(2 n p-p^2+p+1\right). 
\end{eqnarray*} }
The above polynomial can be written as follows:
{\small \begin{eqnarray*}
&&U_{2}(u_{2}) = 2 (p + 1) (p + 2)
   \left (5 p^3 + 8 p^2 + p + 
    2 \right) {u_ 2}^8 \\& &- \left (\left (12 p^4 + 40 p^3 + 60 p^2 + 
       64 p + 16 \right) (n - p) + 12 p^5 + 52 p^4 + 100 p^3 + 124 p^2 + 80 p + 
    16 \right) {u_ 2}^7  \\& &+ \left (\left (34 p^4 + 116 p^3 + 122 p^2 + 
       40 p + 8 \right) (n - p) + 4 p^5 + 15 p^4 + 34 p^3 + 51 p^2 + 
    36 p + 4 \right) {u_ 2}^6  \\& &- \left (\left (32 p^3 + 72 p^2 + 
       72 p + 48 \right) (n - p)^2 + \left (32 p^4 + 104 p^3 + 
       144 p^2 + 120 p + 48 \right) (n - p) \right) {u_ 2}^5  \\& &
       + \left (\left (42 p^3 + 94 p^2 + 
       52 p \right) (n - p)^2 + \left (10 p^4 + 28 p^3 + 50 p^2 + 
       56 p + 24 \right) (n - p) \right) {u_ 2}^4  \\& &
       - \left (\left (28 p^2 + 32 p +  16 \right) (n - p)^3 + \left (28 p^3 + 60 p^2 + 48 p + 
       16 \right) (n - p)^2 \right) {u_ 2}^3  \\& &
       + \left (\left (22 p^2 + 24 p \right) (n - p)^3 
       + \left (8 p^3 + 15 p^2 + 20 p + 14 \right) (n - p)^2 \right) {u_ 2}^2 
        \\& &- \left (8\left (p^2 + p \right) (n - p)^3 + 8 p (n - p)^4 \right) {u_ 2}  + 4 p (n - p)^4 +
 2\left (p^2 + p + 1 \right) (n - p)^3.  
\end{eqnarray*} }

Then we see that the coefficients of the polynomial $U_{2}(u_{2})$ are positive for even degree and negative for odd degree terms.  Thus, if the equation $U_{2}(u_{2}) = 0$ has real solutions, then these are all positive.  So the solutions $u_{2} = \gamma_{1}, \gamma_{2}$ are positive.  Now if we take the lexicographic order $>$ with $z > u_{2} > u_{1} > u_{0}$ for a monomial ordering on $R$ we see that the Gr\"obner basis for ideal $J$ contains the polynomial $U_{0}(u_{0})$:
\begin{equation*}
\begin{array}{lll}
&& \displaystyle U_{0}(u_{0}) =\sum^{8}_{j= 0} b_j(n, p) {u_0^{}}^{j}_{}, \end{array}  
\end{equation*}
where $b_j(n, p) $ $(j = 0, \ldots,  8)$ are polynomials of $ n, p$ given by 

{\footnotesize \begin{eqnarray*}
& & b_8(n, p) = (4 n-p+1)^4 \left(4 n p-3 p^2+p+2\right), \\ 
&&-------------------------------------\\
& & b_7(n, p) = -4 (4 n-p+1)^3 \left(4 n p-3 p^2+p+2\right)\left(8 \left(p^3+p^2-p\right)
 \left(n-\frac{4 p}{3}\right)^2 \right.\\
 & & +\frac{2}{3} \left(20 p^4+35 p^3+p^2-6\right) \left(n-\frac{4 p}{3}\right)+\frac{1}{9}
 \left.  \left(32 p^5+80 p^4+64 p^3+81 p^2+15 p\right) \right),   \\ 
&&-------------------------------------\\
& & b_6(n, p) = 2 (4 n-p+1)^2 \Big( 256 p^2 \left(p^3+3 p^2+8 p+4\right) \left(n-\frac{4
   p}{3}\right)^5 +\frac{128}{3} p \left(31 p^5+99 p^4 \right) \\
 & &+251 p^3  +229 p^2+105 p\left. +24\right) \left(n-\frac{4
   p}{3}\right)^4  +\frac{16}{9} \left(1453 p^7\right. +5055 p^6+12251
   p^5+14491 p^4  \\ & & +11064 p^3+5682 p^2 \left. +1620 p+144\right)
   \left(n-\frac{4 p}{3}\right)^3  +\frac{8}{27} \left(7682 \right. 
   p^8+29130 p^7+69385 p^6 \\ & & +93512 p^5  +91377 p^4+66996
   p^3+34938 p^2+11556 p  \left. +1836\right) \left(n-\frac{4
   p}{3}\right)^2 +\frac{4}{81} \left(17264 p^9 \right.  \\ & & +70326
   p^8 +170383 p^7+251516 p^6+281415 p^5+238755 p^4+159678
   p^3+82647 p^2  \\ & &  +29700 p\left.  +6156\right) \left(n-\frac{4
   p}{3}\right)   +\frac{1}{243} \left(24220 p^{10}+\right. 101688
   p^9+246461 p^8+374056 p^7 \\ & &  +441102 p^6+375186 p^5+248805
   p^4+99990 p^3-864 p^2 \left.   -17172 p-7776\right) \Big), 
\\  
&&-------------------------------------\\
& & b_5(n, p) = -4 (1 + 4 n - p) \Big( 
512 \big((p-2)^7+18 (p-2)^6+145 (p-2)^5+665 (p-2)^4
 \\ & & +1841
   (p-2)^3+3036 (p-2)^2+2740 (p-2)+1040 \big)
   \left(n-\frac{4 p}{3}\right)^6+\big(2944 (p-2)^8  \\
 & & +59392
   (p-2)^7 +539648 (p-2)^6+2855808 (p-2)^5+9527680
   (p-2)^4 \\ & & +20345728 (p-2)^3+26992640 (p-2)^2+20261376
   (p-2)+6572032\big) \left(n-\frac{4 p}{3}\right)^5  \\ & &
   +\frac{32}{3} \big(656 (p-2)^9+14659
   (p-2)^8+148284 (p-2)^7 +886309 (p-2)^6+3429435
   (p-2)^5  \\ & &+8861566 (p-2)^4+15226429 (p-2)^3+16721530
   (p-2)^2+10623556 (p-2) \\ & & +2969008\big) \left(n-\frac{4
   p}{3}\right)^4+\frac{8}{27} \big(425380 (p-1)^9+2863646
   (p-1)^8+11577253 (p-1)^7 \\ & &+30611791 (p-1)^6+54558303
   (p-1)^5+65610147 (p-1)^4+51897508 (p-1)^3  \\ & &+25321372
   (p-1)^2+6590944 (p-1)+611632\big) \left(n-\frac{4
   p}{3}\right)^3+\frac{8}{27} \big(6379969
   (p-2)^9 \\ & & +46794397 (p-2)^8+228835788 (p-2)^7+781428193
   (p-2)^6+1896404686 (p-2)^5  \\ & & +3261741064 (p-2)^4+3884236767
   (p-2)^3+3038186126 (p-2)^2+1397340138
   (p-2) \\ & & +283999648\big) \left(n-\frac{4
   p}{3}\right)^2+\frac{2}{81} \big(239851565
   (p-2)^9+1292327534 (p-2)^8  \\
    & & +4925962096 (p-2)^7+13582984080
   (p-2)^6+27208043881 (p-2)^5+39118134870
   (p-2)^4  
   \end{eqnarray*} }
{\footnotesize \begin{eqnarray*}    
   & & +39117496470 (p-2)^3+25567903940
   (p-2)^2+9637883888 (p-2)+1529459552\big)
   \left(n-\frac{4 p}{3}\right) \\
    & &+\frac{1}{729}
   \big(3992109985 (p-2)^9+16666847996 (p-2)^8+50919921029
   (p-2)^7 \\ & & +114707778059 (p-2)^6+189167284379
   (p-2)^5+222947172468 (p-2)^4  \\ & &+178718186969
   (p-2)^3+88184923605 (p-2)^2+21024910874
   (p-2)+644929528 \big)
\Big), \\
&&-------------------------------------\\
 & & b_4(n, p) = \big(4096 p^8+24576 p^7+86016 p^6+131072 p^5+245760
   p^4+294912 p^3 +65536 p^2 \big) \\ & & \times \left(n-\frac{4
   p}{3}\right)^8+\frac{2048}{3} \big(49 p^9+321 p^8+1233
   p^7+2261 p^6+3750 p^5+4782 p^4+2740 p^3  +768 p^2\\ & & +96
   p \big) \left(n-\frac{4 p}{3}\right)^7+\frac{256}{9}
    \big(4105 p^{10}+28932 p^9+118842 p^8+247316 p^7+400641
   p^6    +523896 p^5 \\ & & +417748 p^4+207240 p^3+67440 p^2+11808
   p+576 \big) \left(n-\frac{4 p}{3}\right)^6+\frac{256}{27} \big(24098 p^{11}   +180444
   p^{10} \\ & &  +778551 p^9+1777939 p^8+2922618 p^7+3878928
   p^6+3651965 p^5+2429397 p^4   +1195056 p^3 \\ & & +403596 p^2+79488
   p+6912 \big) \left(n-\frac{4 p}{3}\right)^5+\frac{128}{81} \big(174458 p^{12}+1372854
   p^{11}  +6131907 p^{10} \\ & & +15018679 p^9+25376340 p^8+34044954
   p^7+35115293 p^6+27920751 p^5  +17703822 p^4  \\ & & +8548038
   p^3+2855736 p^2+589464 p+57024\big) \left(n-\frac{4
   p}{3}\right)^4+\frac{32}{243} \big(1605292
   p^{13}\\ & &  +13148268 p^{12}  +60007947 p^{11}+154833551
   p^{10}+269354535 p^9+363913566 p^8+393754726
   p^7  \\ & & +350780631 p^6+265670607 p^5+163840680 p^4+75798837
   p^3+24414912 p^2+4825656 p  \\ & & +402408 \big) \left(n-\frac{4
   p}{3}\right)^3+\frac{16}{729}  \big(4614376
   p^{14}+38994072 p^{13}+179809266 p^{12}+481672982
   p^{11} \\ & &  +860592570 p^{10}+1167204042 p^9+1287983179
   p^8+1232426736 p^7+1067917563 p^6  \\ & & +795423942 p^5+465504435
   p^4+200889342 p^3+59178519 p^2+10162260 p+790236 \big)
   \left(n-\frac{4 p}{3}\right)^2\\
\\ & & +\frac{8}{2187} \big(7627984
   p^{15}+65954616 p^{14}+304768332 p^{13}+838750310
   p^{12}+1537511100 p^{11}  \\ & & +2100643851 p^{10}+2341188529
   p^9+2357827446 p^8+2283500553 p^7+1980427914
   p^6 \\ & & +1382011065 p^5+732725784 p^4+280881675 p^3+72047799
   p^2+11519658 p+892296 \big) \left(n-\frac{4 p}{3}\right) \\ & & +\frac{1}{6561} \big(22391824 p^{16}+196359072
   p^{15}+904687368 p^{14}+2541976256 p^{13}+4803063945
   p^{12}\\ & & +6757501974 p^{11}+7870887961 p^{10}+8606912064
   p^9+9296805819 p^8+8981127162 p^7\\ & & +6981506001
   p^6+4203824292 p^5+1914330996 p^4+634122108 p^3+148364622
   p^2 \\ & &+22149936 p+1627128 \big),
\\ 
&&-------------------------------------\\
& & b_3(n, p) = -4 \Big(\left(6144 p^8+26624 p^7+92160 p^6+161792 p^5+106496
   p^4+16384 p^3+16384 p^2\right) \\ & &\times \left(n-\frac{4
   p}{3}\right)^8+\frac{512}{3} \big(273 p^9+1307 p^8+4563
   p^7+8813 p^6+8036 p^5+3304 p^4+1240 p^3+432 p^2  \\ & & +96 p \big ) \left(n-\frac{4 p}{3}\right)^7+\frac{128}{9}
   \big(10530 p^{10}+54451 p^9+192546 p^8+397024 p^7+434848
   p^6+259613 p^5 \\ & & +112820 p^4+44496 p^3+15864 p^2+4752
   p+288 \big) \left(n-\frac{4 p}{3}\right)^6+\frac{8}{729}
   \big(8070816 (p-1)^{14}  \\ & & +162347188 (p-1)^{13}+1543182304
   (p-1)^{12}+9135183025 (p-1)^{11}+37388517475
   (p-1)^{10}  \\ & & +111258986177 (p-1)^9+247002835859
   (p-1)^8+413979949333 (p-1)^7  \\ & & +524851826468
   (p-1)^6+499980342751 (p-1)^5+352066978464
   (p-1)^4 \\ & & +177974328764 (p-1)^3+61419614640
   (p-1)^2+13149220928 (p-1)+1359883264 \big)  \\ & & \times 
   \left(n-\frac{4 p}{3}\right)^2+\frac{2}{2187} \big(23906064
   (p-2)^{15}+866418488 (p-2)^{14}+14699562236
   (p-2)^{13}
\end{eqnarray*} }
{\footnotesize \begin{eqnarray*}     
   & &  +154716831646 (p-2)^{12}+1128622433268
   (p-2)^{11}   +6037636612741 (p-2)^{10}  \\ & & +24442007879239
   (p-2)^9+76163670514689 (p-2)^8+183984545006253
   (p-2)^7 \\ & &  +344157568970319 (p-2)^6+493885552028239
   (p-2)^5+533343522444589 (p-2)^4  \\ & &  +419010880295843
   (p-2)^3+225773544980448 (p-2)^2+74489301958682
   (p-2)  \\ & &  +11323979370056 \big) \left(n-\frac{4
   p}{3}\right)+\frac{128}{27} \big(56544
   p^{11}+310214 p^{10}+1106496 p^9+2378078 p^8  \\ & &  +2901275
   p^7+2104141 p^6+1107319 p^5+510813 p^4+218826 p^3+93474
   p^2+26676 p+3456 \big) \\ & & \times \left(n-\frac{4 p}{3}\right)^5+\frac{32}{81} \big(745440 p^{12}+4280770
   p^{11}+15245253 p^{10}+33332221 p^9+42879460 p^8  \\ & & +33980882  p^7+19792277 p^6+10467657 p^5+5460186 p^4+2954718
   p^3+1326888 p^2+385992 p \\ & & +59616 \big) \left(n-\frac{4
   p}{3}\right)^4+\frac{8}{243} \big(6221520
   p^{13}+37017932 p^{12}+129677616 p^{11}+279941813
   p^{10}  \\ & & +358391423 p^9+276796984 p^8+149657704 p^7+79836921
   p^6+54122511 p^5+42334110 p^4\\ & &    +27686430 p^3+12549384
   p^2+3621996 p+480168 \big) \left(n-\frac{4 p}{3}\right)^3+\frac{1}{6561}\big(15502032 (p-2)^{16} \\ & & 
   +594225632 (p-2)^{15}+10682883096 (p-2)^{14}+119460467888
   (p-2)^{13} \\ & &  +929124476957 (p-2)^{12}+5324019350794
   (p-2)^{11}+23224120188504 (p-2)^{10} \\ & &  +78573533111735
   (p-2)^9   +208092670261551 (p-2)^8+432175097670381
   (p-2)^7 
   \\ & & +700253199489961 (p-2)^6+873949665477915
   (p-2)^5    +821258270676976 (p-2)^4  \\ & & +559567017398773
   (p-2)^3+259270566907163 (p-2)^2+72376779210930
   (p-2) \\ & & +9043636403392\big)\Big), 
\\ 
&&-------------------------------------\\
& & b_2(n, p) = 2 \Big(\big(49152 p^7+163840 p^6+245760 p^5+262144 p^4+65536
   p^3 \big) \left(n-\frac{4 p}{3}\right)^9\\ & &  + \big(460800
   p^8+1851392 p^7+3360768 p^6+3985408 p^5+2187264
   p^4+376832 p^3 \big) \left(n-\frac{4 p}{3}\right)^8 \\ & &  
   +\frac{1024}{3} \big(5403 p^9+25063
   p^8+52965 p^7+70829 p^6+53652 p^5+19528 p^4+2496 p^3-48
   p^2 \big)\\ & & \times \left(n-\frac{4 p}{3}\right)^7+\frac{256}{9}
   \big(145599 p^{10}+755751 p^9+1803240 p^8+2693636
   p^7+2483131 p^6 \\ & & +1301357 p^5+340194 p^4+27276 p^3-3240
   p^2-288 p \big) \left(n-\frac{4 p}{3}\right)^6+\frac{128}{27} \big(1217196
   p^{11} \\ & & +6900068 p^{10}+18123783 p^9+29729521 p^8+31437354
   p^7+20579762 p^6+7722099 p^5 \\ & & +1317717 p^4 -47088 p^3-50652
   p^2-6912 p \big) \left(n-\frac{4 p}{3}\right)^5+\frac{64}{81} \big(6577044
   (p-1)^{12} \\ & & +118846646 (p-1)^{11}+986151751
   (p-1)^{10}+4971372165 (p-1)^9+16952334168
   (p-1)^8   \\ & &+41153296820 (p-1)^7+72822618167
   (p-1)^6+94485465333 (p-1)^5 +89054474614
   (p-1)^4  \\ & &+59359625980 (p-1)^3+26515906016
   (p-1)^2+7115536800 (p-1)+866081984 \big) \left(n-\frac{4
   p}{3}\right)^4  \\ & &+\frac{16}{243} \big(46102536
   (p-1)^{13}+893886380 (p-1)^{12}+8009668926
   (p-1)^{11}+43931708085 (p-1)^{10}   \\ & & +164515487580
   (p-1)^9+443853483382 (p-1)^8+886597009628
   (p-1)^7\\ & &   +1325763408157 (p-1)^6+1481799690902
   (p-1)^5+1220624912368 (p-1)^4\\ & &+719031079864
   (p-1)^3+286287517072 (p-1)^2+68910586496
   (p-1)+7554724096 \big)\\   
   & & \times  \left(n-\frac{4
   p}{3}\right)^3+\frac{16}{729} \big (50554800
   (p-1)^{14}+1042222652 (p-1)^{13}+9979234256
   (p-1)^{12} \\ & & +58831030897 (p-1)^{11}+238508655079
   (p-1)^{10}+702977316389 (p-1)^9  \\ & &+1551902086965
   (p-1)^8+2603556297720 (p-1)^7+3330426101016
   (p-1)^6 
\end{eqnarray*} }
{\footnotesize \begin{eqnarray*}    
   & &+3226707834546 (p-1)^5+2325808006688
   (p-1)^4+1206279110888 (p-1)^3\\ & &  +424160371456
   (p-1)^2+90114540928 (p-1)+8673425408 \big)
   \left(n-\frac{4 p}{3}\right)^2 \\ & &+\frac{4}{2187} \big(125304816
   (p-1)^{15}+2722292056 (p-1)^{14}+27568091156
   (p-1)^{13} \\
    & &+172620320070 (p-1)^{12}+747161024728
   (p-1)^{11}+2366345497299 (p-1)^{10}  \\ & &+5659184311954
   (p-1)^9+10391520423657 (p-1)^8 +14742382956732
   (p-1)^7\\ & & +16117344646500 (p-1)^6+13420607976808
   (p-1)^5+8317499536784 (p-1)^4 \\ & &+3686792531040
   (p-1)^3+1090498987456 (p-1)^2+188447185920
   (p-1) \\ & &+13718142976 \big) \left(n-\frac{4
   p}{3}\right)+\frac{1}{19683}(393910608
   (p-2)^{16}+15229116448 (p-2)^{15} \\ & &+275646193992
   (p-2)^{14}+3100284455968 (p-2)^{13}+24249634772905
   (p-2)^{12} 
\\ & &+139844686276206 (p-2)^{11}+614944316806352
   (p-2)^{10}+2102754931172164 (p-2)^9 \\ & &+5648681203773822
   (p-2)^8+11955552983145732 (p-2)^7+19859975096618684
   (p-2)^6\\ & &+25602952630231152 (p-2)^5+25089976494997385
   (p-2)^4+18047962971277118 (p-2)^3 \\ & &+8974256392885572
   (p-2)^2+2750739644643676 (p-2)+390518428891112)
\Big), 
\\ 
&&-------------------------------------\\
& & b_1(n, p) = -4 (1 + p)\Big(
\left(65536 p^6+163840 p^5+98304 p^4+65536 p^3\right)
   \left(n-\frac{4 p}{3}\right)^9\\ & & +\big(589824 p^7+1867776
   p^6+1867776 p^5+1179648 p^4+393216 p^3\big)
   \left(n-\frac{4 p}{3}\right)^8  \\ & &+\left(2286080 p^8+8690688
   p^7+11844096 p^6+9195520 p^5+4462592 p^4+966656
   p^3\right) \left(n-\frac{4 p}{3}\right)^7 \\ & &+\frac{128}{9}
   \big(352586 p^9+1551023 p^8+2630223 p^7+2519177
   p^6+1551591 p^5+567228 p^4+89244 p^3 \\
    & & +144 p^2 \big )
   \left(n-\frac{4 p}{3}\right)^6+\frac{256}{9} \big(241848
   p^{10}+1198141 p^9+2397385 p^8+2739873 p^7+2044315
   p^6 \\ & & +996450 p^5+282204 p^4+34920 p^3+288 p^2 \big)
   \left(n-\frac{4 p}{3}\right)^5+\frac{32}{27}
   \big(5183216 p^{11}+28318098 p^{10}  \\ & & +64527951
   p^9+85185889 p^8+74405822 p^7+44421800 p^6+17327475
   p^5+3959649 p^4  \\ & &+424512 p^3+8532 p^2 \big)
   \left(n-\frac{4 p}{3}\right)^4+\frac{8}{243}
  \big(108797264 p^{12}+644522804 p^{11}+1629982860
   p^{10} \\ & &+2420187287 p^9+2406568896 p^8+1679806635
   p^7+813030876 p^6+259492401 p^5+50300136 p^4  \\ & &+4946589
   p^3+85536 p^2-10692 p \big) \left(n-\frac{4
   p}{3}\right)^3+\frac{8}{243} \big(39979920
   p^{13}+253174084 p^{12} \\ & & +696564112 p^{11}+1138779993
   p^{10}  +1260935722 p^9+998072517 p^8+566765046
   p^7  \\ & &+225114543 p^6+59846670 p^5+9584811 p^4+536058
   p^3-98172 p^2-21384 p\big ) \left(n-\frac{4 p}{3}\right)^2 \\ & & +\frac{4}{729} \big(50280776
   p^{14}+335998500 p^{13}+989279874 p^{12}+1750818979
   p^{11}+2123240909 p^{10} \\ & & +1867929560 p^9+1203732324
   p^8+558263160 p^7+178897338 p^6+34688142 p^5+1198962
   p^4   \\ & & -1440747 p^3-434727 p^2-53946 p \big) \left(n-\frac{4
   p}{3}\right)+\frac{1}{19683}(489042736 (p-1)^{15}  \\ & &+10741721536
   (p-1)^{14}+109618245032 (p-1)^{13}+690084927664
   (p-1)^{12}  \\ & &+2999595396751 (p-1)^{11}+9542417798552
   (p-1)^{10}  +22963667848149 (p-1)^9   \\ & &+42583235164476
   (p-1)^8+61361832092016 (p-1)^7+68713003505656
   (p-1)^6 \\ & &  +59299589725888 (p-1)^5+38721416081312
   (p-1)^4+18510971432512 (p-1)^3   \\ & &+6112599731968
   (p-1)^2+1245725978624 (p-1)+117991145472)
\Big),  
\\ 
&&-------------------------------------\\
\end{eqnarray*} }
{\footnotesize \begin{eqnarray*}    
& & b_0(n, p) = (2 + p + 8 p^2 + 5 p^3) \Big(
\left(32768 p^4+65536 p^3\right) \left(n-\frac{4 p}{3}\right)^9+\big(294912 p^5+786432 p^4
   \\ & & +393216
   p^3 \big) \left(n-\frac{4 p}{3}\right)^8+\big(1146880
   p^6+3801088 p^5+3506176 p^4+983040 p^3 \big)
   \left(n-\frac{4 p}{3}\right)^7 
   \\ & &+\frac{256}{9} \big(89069
   p^7+350806 p^6+455973 p^5+244692 p^4+47088 p^3+216
   p^2 \big) \left(n-\frac{4 p}{3}\right)^6
\\ & &+\big(3513856
   p^8+\frac{143439872 p^7}{9}+\frac{235910656
   p^6}{9}+20261888 p^5+7532544 p^4+1134592 p^3 \\ & &+24576
   p^2 \big) \left(n-\frac{4 p}{3}\right)^5+\frac{256}{27}
   \big(335951 p^9+1714250 p^8  +3364529 p^7+3336384
   p^6+1797066 p^5 \\ & & +516528 p^4+71712 p^3+3888 p^2 \big)
   \left(n-\frac{4 p}{3}\right)^4+\frac{32}{243}
   \big(14384188 p^{10}+81095072 p^9+183025959 p^8 
   \\ & &
   +218550762 p^7+151598655 p^6  +62898390 p^5+15659973
   p^4+2346894 p^3+195129 p^2+1458 p \big) 
   \\ & &\times \left(n-\frac{4 p}{3}\right)^3+\frac{32}{243} \big(5424180
   p^{11}+33305912 p^{10}+84256741 p^9+116455788
   p^8+97611129 p^7 \\ & &+52111152 p^6+18296307 p^5+4361364
   p^4+706887 p^3+68040 p^2+2916 p \big) \left(n-\frac{4 p}{3}\right)^2
 \\ & &  +\frac{32}{729} \big(3532256
   p^{12}+23347544 p^{11}+64988012 p^{10}+101259894
   p^9+98684001 p^8+63802980 p^7 \\ & &+28499985 p^6+9061956
   p^5+2056995 p^4+325620 p^3+40095 p^2+4374 p \big)
   \left(n-\frac{4 p}{3}\right) \\ & &+\frac{1}{19683}(
289238768
   p^{13}+2039206624 p^{12}+6165039240 p^{11}+10647882432
   p^{10}+11790525807 p^9 \\ & &+8913187026 p^8+4784163588
   p^7+1862446284 p^6+528299010 p^5+109752408 p^4+15825132
   p^3 \\ & &+236196 p^2-373977 p+39366) \Big). 
\end{eqnarray*} }

Thus, for $2  \leq p \leq \frac{3}{4} n$, we see that the coefficients $b_j(n, p)$ ($j =0, \ldots, 8$)  of the polynomial $U_{0}(u_{0})$ are positive for even degree and negative for odd degree terms and hence,  if the equation $U_{0}(u_{0}) = 0$ has real solutions, then these are all positive.  In particular,  the solutions $u_{0} = \beta_{1}, \beta_{2}$ are positive.  

Hence we see that the solutions of the system (\ref{syst2}) are of the form:
$$
\{u_{0} = \beta_1,\, u_{1} = \al_1,\, u_{2} = \gamma_1,\, u_{3} = 1\} \ \mbox{and}\ \{u_{0} = \beta_2,\, u_1 = \al_2, \, u_{2} = \gamma_2, \, u_{3} = 1\}
$$
and satisfy $\al_1, \al_2 \neq 1$
\end{proof}


\begin{thebibliography}{50}
  
  
  




 
 
  \bibitem [ArDzNi1]{ADN1} 
A. Arvanitoyeorgos, V.V. Dzhepko and Yu. G. Nikonorov:
{\it Invariant Einstein metrics on some homogeneous spaces of classical Lie groups}, 
 Canad. J. Math. 61 (6) (2009) 1201--1213.
 

\bibitem [ArDzNi2]{ADN2} 
A. Arvanitoyeorgos, V.V. Dzhepko and Yu. G. Nikonorov:
{\it Invariant Einstein metrics on quaternionic Stiefel manifolds},
Bull. Greek Math. Soc. 53 (2007) 1--14.

\bibitem[ArMoSa]{ArMoSa}
A. Arvanitoyeorgos, K. Mori and Y. Sakane:
{\it Einstein metrics on compact Lie groups which are not naturally reductive},
Geom. Dedicata 160(1) (2012) 261--285.

\bibitem[ArSaSt1]{ArSaSt1}
A. Arvanitoyeorgos, Y. Sakane and M. Statha:
{\it New homogeneous Einstein metrics on Stiefel manifolds},
Differential Geom. Appl. 35 (S1) (2014) 2--18.

\bibitem[ArSaSt2]{ArSaSt2}
A. Arvanitoyeorgos, Y. Sakane and M. Statha:
{\it  Einstein metrics on the symplectic group which are not naturally reductive}, in:
 {\it Current Developments in Differential Geometry and its Related Fields}, Proccedings of the 4th Ιnternational Colloquium on Differential Geometry and its Related Fields, Velico Tarnovo, Bulgaria 2014, World Scientific (2015), 1--22.


 
  
\bibitem[Be]{Be} 
A. L. Besse: 
 {\it Einstein Manifolds}, 
  Springer-Verlag, Berlin, 1986. 
  
  
 \bibitem[B\"oWaZi]{BWZ}
 C. B\"ohm, M. Wang and W. Ziller:
  {\it A variational approach for compact homogeneous Einstein manifolds}, Geom. Func. Anal. 14 (4) (2004) 681--733.

  
\bibitem[Je]{J2}
G. Jensen:
{\it Einstein metrics on principal fiber bundles},
J. Differential Geom. 8 (1973) 599--614.


    
    \bibitem[PaSa]{PS}
  J-S. Park and Y. Sakane:
  {\it Invariant Einstein metrics on certain homogeneous spaces},
  Tokyo J. Math.  20   (1) (1997) 51--61.
  

\bibitem[St]{St}
M. Statha:
{\it Invariant metrics on homogeneous spaces with equivalent isotropy summands}, Toyama Math. J. Vol. 38 (2016) 35--60.
  
\bibitem[Wa1]{W1}
M. Wang: 
{\it Einstein metrics from symmetry and bundle constructions}, In: Surveys in Differential Geometry: Essays on Einstein Manifolds. Surv. Differ. Geom. VI, Int. Press, Boston, MA 1999.

\bibitem[Wa2]{W2}
M. Wang: 
{\it Einstein metrics from symmetry and bundle constructions: A sequel}, In: 
 Differential Geometry: Under the Influence of S.-S. Chern, Advanced Lectures in Math., Vol. 22,
Higher Education Press/International Press, 2012, 253--309 .  
  
\bibitem[WaZi]{WZ}
M. Wang and W. Ziller: 
{\it Existence and non-excistence of homogeneous Einstein metrics}, 
Invent. Math. 84 (1986)  177--194.

\bibitem[Zi]{Zi} W. Ziller: {\it Homogeneous Einstein metrics on spheres and projective spaces}, Math. Ann. 259 (1982) 351--358.
  \end{thebibliography}
\end{document}